\documentclass{article}
\usepackage{graphicx} % Required for inserting images

\usepackage{amsmath,amssymb,amsthm}
\usepackage[margin=2cm]{geometry}

\usepackage{xcolor}
\usepackage{hyperref}
\usepackage{authblk}

\usepackage{tikz}
\usetikzlibrary{positioning,arrows.meta}

\newtheorem{theorem}{Theorem}[section]

\newtheorem{remark}[theorem]{Remark}
\newtheorem{lemma}[theorem]{Lemma}

\newtheorem{proposition}[theorem]{Proposition}
\newtheorem{corollary}[theorem]{Corollary}

\DeclareMathOperator{\spann}{span}

\DeclareMathOperator{\sign}{sign}
\DeclareMathOperator{\wt}{wt}
\DeclareMathOperator{\ddp}{dp}
\DeclareMathOperator{\op}{op}

\title{On uniform eventowns}
\author[1]{Danila Cherkashin}
\author[2]{Pavel Prozorov}

\affil[1]{Institute of Mathematics and Informatics, Bulgarian Academy of Sciences, Sofia}
\affil[2]{Saint Petersburg State University, Saint Petersburg, Russia}

\date{July 2026}

\begin{document}

\maketitle

\begin{abstract}
  Suppose that $n=2m$, $k=2t$ and $n > 10 k^7$.
  We show that if a family $\mathcal F$ of $k$-subsets of an $n$-set has only even pairwise intersections then $|\mathcal F| \leq \binom{m}{t}$. 
  Moreover, every extremal family has an atomic structure.

  This result was previously proved by Frankl and Tokushige for $n > n_{FT}(k)$, where $n_{FT}(k)$ is at least exponential. 
  The main technique is Delsarte linear programming.
\end{abstract}

\section{Introduction}

\subsection{The eventown problem}

Let $[n]=\{1,\dots,n\}$.  A family $\mathcal E\subseteq 2^{[n]}$ is
called an \emph{eventown} if
\[
    |E\cap E'|\equiv 0\pmod 2
    \qquad\text{for all }E,E'\in\mathcal E.
\]
Allowing $E=E'$ in this condition ensures, in particular, that every member
of $\mathcal E$ has even cardinality.  If
\(
[n]=p_1\sqcup\cdots\sqcup p_{\lfloor n/2\rfloor}\sqcup R
\)
with $|p_i|=2$ and $|R|\leq1$, then the following so-called \textit{atomic construction}
\[
\mathcal{A} = \left\{
        \bigcup_{i\in I}p_i:
        I\subseteq[\lfloor n/2\rfloor]
    \right\}
\]
is an eventown of size $2^{\lfloor n/2\rfloor}$. Berlekamp~\cite{Berlekamp1969} and Graver~\cite{Graver1975}
proved independently that an atomic construction has the largest possible size:
\begin{equation*}
    |\mathcal E|\leq 2^{\lfloor n/2\rfloor}.
\end{equation*}
Their argument is one of the standard early examples of the linear-algebra
method in extremal set theory.  Indeed,
identifying a set $E$ with its incidence vector $v_E\in\mathbb F_2^n$ and
putting
\[
    C= \spann_{\mathbb F_2}\{v_E:E\in\mathcal E\},
\]
the eventown condition gives $C\subseteq C^\perp$.  Hence
$\dim C\leq\lfloor n/2\rfloor$, and therefore
$|\mathcal E|\leq|C|\leq2^{\lfloor n/2\rfloor}$.  

Moreover, equality forces $\mathcal E=C$, so the extremal families are precisely maximal-dimensional
totally isotropic binary subspaces. In coding-theoretic language an extremal eventown for even $n$ is called a self-dual code of Type I~\cite{nebe2006self}. 
The study of the combinatorial structure of such codes is a classical but still relevant problem in coding theory. 
One of the main instruments of this study is the weight enumerator polynomial
\[
W_C (x,y) := \sum_{c \in C} x^{\wt (c)} y^{n-\wt (c)},
\]
where $\wt (c)$ is the number of non-zero coordinates in a codeword $c$.
Although strong restrictions on the weight enumerators of self-dual
codes are known~\cite{bouyuklieva2021self}, complete weight distributions remain unknown for
many important families of codes, in particular for Reed--Muller codes~\cite{borissov2026verifying,markov2025weight}.

From the combinatorial point of view, much of the recent work has
focused on quantitative refinements and structural generalizations.  One
such direction is supersaturation.  For a family $\mathcal A$ of even-sized
subsets, write
\[
    \op(\mathcal A)
    :=
    \bigl|\{\{A,B\}\subseteq\mathcal A:
             |A\cap B|\text{ is odd}\}\bigr|.
\]
O'Neill initiated the problem of minimizing $\op(\mathcal A)$
when $|\mathcal A|$ exceeds the eventown bound, and proved the sharp estimate
\[
    \op(\mathcal A)
    \geq s\,2^{\lfloor n/2\rfloor-1}
\]
for $|\mathcal A|=2^{\lfloor n/2\rfloor}+s$ and $s\in\{1,2\}$
\cite{ONeill2023}.  Wei, Zhao, Zhang and Ge subsequently established the
same bound, for sufficiently large $n$, throughout the range
\(
1\leq s\leq 2^{\lfloor n/8\rfloor}/n
\), and obtained the general estimate
\(
\op(\mathcal A)
\geq s2^{\lfloor n/2\rfloor-2}
\)
by discrete Fourier analysis~\cite{WeiZhaoZhangGe2025}.  In a complementary
spectral direction, Antipov and Cherkashin recast eventown and several
modular variants in terms of the Lov\'asz theta function
\cite{AntipovCherkashin2022}.

A second active direction replaces pairwise parity conditions by restrictions
on higher intersections.  Sudakov and Vieira proved uniqueness and stability
results for $k$-wise eventowns when $k\geq3$~\cite{SudakovVieira2018}.
Johnston and O'Neill introduced the more general language of
$\boldsymbol\alpha$-intersection patterns, prescribing the parity of every
$i$-wise intersection for $1\leq i\leq k$, and determined the corresponding
extremal functions for all patterns of length three and asymptotically for
all patterns of length four~\cite{JohnstonONeill2025}.  This program was
further developed by Wei, Zhang and Ge~\cite{WeiZhangGe2025}; most recently,
Dong, Ouyang and Wei obtained sharp asymptotics for every fixed binary
intersection pattern~\cite{dong2026sharp}.  These results place the
classical eventown theorem inside a broader theory of parity-restricted set
systems.

\subsection{The uniform problem}

The present problem imposes a different restriction: all members are required
to lie in a single layer of the Boolean lattice.  This destroys the closure
under symmetric difference that makes the non-uniform theorem immediate,
since the span of incidence vectors is no longer confined to the prescribed
Hamming sphere.  The natural replacement for the ambient vector-space
structure is therefore the spectral theory of the Johnson association
scheme.

Let $[n]=\{1,\dots,n\}$, and let $t\geq 1$. 
A family $\mathcal F\subseteq \binom{[n]}{2t}$ is called a \emph{$2t$-uniform eventown} if
\[
    |f\cap f'|\equiv 0 \pmod 2
    \qquad\text{for all }f,f'\in\mathcal F.
\]
We denote its maximum possible size by
\[
    m_{\mathrm{ev}}(n,2t)
    :=
    \max\left\{
        |\mathcal F|:
        \mathcal F\subseteq\binom{[n]}{2t},\ 
        |f\cap f'|\equiv0\pmod2
        \text{ for all }f,f'\in\mathcal F
    \right\}.
\]
A $2t$-uniform \textit{atomic eventown} $\mathcal A_t$ is defined as the intersection of the atomic construction $\mathcal{A}$ with the $2t$-layer of the Hamming cube. From now on, assume that $n=2m$. Clearly 
\[
  m_{\mathrm{ev}}(n,2t)  \geq  |\mathcal A_t| = \binom{m}{t}.
\]

Frankl and Tokushige~\cite{FranklTokushige2016} proved that for every fixed $t$ and even $n > n_{FT}(k)$ the equality
\[
    m_{\mathrm{ev}}(n, 2t) = \binom{n/2}{t}
\]
holds.  
The quantitative ingredients for $n_{FT(k)}$ in their
structural proof operate at least at an exponential scale in the uniformity. 

For even $n = 2m$ they also proposed a different approach, based on the Delsarte--Hoffman ratio
bound in the Johnson association scheme, and carried it out explicitly for
$2t=6$, obtaining the sharp bound for $n\geq 26$.  The purpose of the
present argument is to formulate this linear-programming certificate for
arbitrary $t$ and to make its dependence on $t$ accessible to asymptotic
analysis.

The spectral problem has two parts.  First, one has to solve the
Frankl--Tokushige equal-eigenvalue equations in strictly positive variables
\[
    a_1,a_3,\dots,a_{2t-1}>0.
\]
Second, for the resulting pseudo-adjacency matrix one has to verify that the
trivial eigenvalue is the largest eigenvalue and that the common nontrivial
even eigenvalue is the least eigenvalue.  We shall refer to the least
threshold $n_{LP}(k)$ for which this particular certificate works for all larger even
$n$ as the \emph{LP threshold}.

Frankl and Tokushige provide numerical evidence that $n_{LP}(k)$ should be much smaller than $n_{FT}(k)$. Here we prove a polynomial upper bound for the quantity $n_{LP}(k)$.

\begin{theorem} \label{th:main}
    Let $n = 2m$, $k = 2t$ and $n > 10k^7$. Then
    \[
    m_{\mathrm{ev}} (2m,2t) = \binom{m}{t}.
    \]
    Moreover, equality is attained only by an atomic construction $\mathcal{A}_t$. 
\end{theorem}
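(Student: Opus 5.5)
We prove the bound with the Delsarte--Hoffman ratio bound in the Johnson scheme $J(n,k)$, $k=2t$. Let $A_j$ be the distance matrices on $\binom{[n]}{k}$, where $(A_j)_{f,f'}=1$ iff $|f\cap f'|=k-j$. The family $\mathcal F$ is an independent set in every $A_j$ with $k-j$ odd, that is, for odd intersection sizes $1,3,\dots,2t-1$. We build the pseudo-adjacency matrix
\[
M=\sum_{i\ \text{odd}} a_i B_i,
\]
where $B_i$ is the relation matrix for intersection size $i$. The weights $a_1,a_3,\dots,a_{2t-1}$ are to be chosen so that the eigenvalues $\lambda_j$, $j=0,\dots,k$, have three properties. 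First, $\lambda_0$ is the largest eigenvalue. Second, the eigenvalues $\lambda_2,\lambda_4,\dots,\lambda_{2t}$ on the even-indexed eigenspaces are all equal to a common value $-\mu$. Third, every other eigenvalue is at least $-\mu$. The ratio bound then gives
\[
|\mathcal F|\le \binom{n}{k}\frac{\mu}{\lambda_0+\mu}.
\]
We normalize so that this equals $\binom{m}{t}$. The reason for insisting on the even-index equalities is that the characteristic vector of $\mathcal A_t$ must be tight, so its projection can live only on $V_0$ and the $V_{2j}$. One can check that $\mathcal A_t$ is invariant under the pair-swapping structure and is annihilated by the odd-indexed eigenprojections. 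This annihilation property dictates the system.

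\textbf{Step 1: solve the equal-eigenvalue system.} The eigenvalues are $\lambda_j=\sum_i a_i E_{k-i}(j)$, with Eberlein polynomials $E_{k-i}(j)$. The equalities $\lambda_2=\lambda_4=\dots=\lambda_{2t}$ give $t-1$ linear equations in $t$ unknowns. Together with the normalization, this determines the $a_i$ up to scale. I would not try to solve it in closed form. Instead I would use the primal tightness of $\mathcal A_t$ in reverse: the inner distribution of $\mathcal A_t$ is explicit, since the number of pairs at intersection $2s$ is a product of binomials in $m,t$. Complementary slackness then suggests that the dual solution is expressible through the odd-indexed Eberlein values evaluated against this distribution. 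For large $n$ I would expand asymptotically, $E_{k-i}(j)\approx \binom{k}{i}\binom{n-k}{k-i}\bigl(1+O(jk^2/n)\bigr)$ corrections. The leading-order solution should be $a_{2t-1}\gg a_{2t-3}\gg\dots$, with ratios of order $n$. I would then prove positivity of every $a_i$ by showing that the error terms are dominated once $n\gtrsim k^{c}$.

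\textbf{Step 2: the eigenvalue inequalities, which I expect to be the main obstacle.} There are two inequalities to establish: $\lambda_0\ge\lambda_j$ for all $j$, and $\lambda_{2j+1}\ge-\mu$ for the odd indices. The naive estimates $|E_r(j)|\le\binom{k}{r}\binom{n-k}{r}$ lose too much. So I would write $\lambda_j$ as a polynomial in $j$ via the hypergeometric expansion
\[
E_r(j)=\sum_h(-1)^h\binom{j}{h}\binom{k-j}{r-h}\binom{n-k-j}{r-h},
\]
and keep only the first two or three terms in $h$. Each higher term costs a factor of roughly $k^2/n$. Summing over $h$, and over the $t$ weights with their geometric-in-$n$ decay, gives an error that is a polynomial in $k$ times $1/n$. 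Requiring that this error be smaller than the main-term gaps should be exactly where a condition like $n>10k^7$ emerges; I would aim to lose at most $k^7$ in a crude but uniform bookkeeping.

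\textbf{Step 3: uniqueness.} If $|\mathcal F|=\binom mt$, then the ratio bound is tight. Tightness forces $1_{\mathcal F}-\frac{|\mathcal F|}{\binom nk}\mathbf 1$ to lie in the $(-\mu)$-eigenspace, which is $V_2\oplus\dots\oplus V_{2t}$ provided the Step 2 inequalities are strict on the odd indices. Hence $1_{\mathcal F}$ is orthogonal to $V_1$, $V_3$, and so on. Orthogonality to $V_1$ means every point lies in equally many sets. Orthogonality to $V_1\oplus V_3\oplus\dots$ constrains the $(2s+1)$-subset degrees. From the even-intersection condition I would then show that two points lying together in one member lie together in every member containing either of them. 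This yields a pairing of $[n]$, and hence $\mathcal F=\mathcal A_t$. I would get there by combining Frankl--Tokushige's structural argument with the degree regularity, which replaces their need for huge $n$.
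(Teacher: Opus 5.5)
Your upper-bound framework matches the paper's: a Hoffman bound for a pseudo-adjacency matrix supported on odd intersection sizes, with $\lambda_2=\cdots=\lambda_{2t}$ and $\chi_{\mathcal A_t}\in V_0\oplus V_2\oplus\cdots\oplus V_{2t}$. However, the step that carries all the weight, positivity of the solution $a_1,\dots,a_{2t-1}$, is only asserted, and the asymptotic input you give for it is wrong.

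The eigenvalue of the intersection-$i$ relation on $V_j$ is not $\binom{k}{i}\binom{n-k}{k-i}(1+O(jk^2/n))$; that is its value only for $j=0$. For $j\le i$ it is about $\binom{i}{j}/\binom{k}{j}$ times the valency, namely $\approx\binom{k-j}{i-j}n^{k-i}/(k-i)!$. For $j>i$ it is $\approx(-1)^{j-i}\binom{j}{i}n^{k-j}/(k-j)!$. Your own Step 2 expansion shows this. With your approximation every $\lambda_j$ agrees with $\lambda_0$ to leading order, so the equal-eigenvalue system is degenerate there. Neither the signs nor the $n^{s-t}$ scaling of $a_{2s-1}$ can then be extracted.

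What is actually needed is control of the Cramer determinants of a $t\times t$ system whose entries live at different powers of $n$. The paper rescales rows and uses the exponent pattern $k+u-\max(2u,2v-1)$ to show that one permutation dominates: the identity for $\det D$, the shift $\pi_s$ for $\det D_s$. It bounds the rest of the Leibniz expansion by grouping permutations by depth: at most $t^\ell$ permutations of depth $\ell$, each suppressed by $(K_0L_0/n)^\ell$ with $K_0L_0\approx k^6/4$. That is where $n>10k^7$ comes from.

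Your Step 2, by contrast, is not the obstacle. Once all $a_{2s-1}>0$, $\lambda_0$ is the largest eigenvalue by Perron--Frobenius (nonnegative matrix, constant row sums). Each $\lambda_{2j-1}$ is positive term by term, because the leading term of $P_{2j-1}(2s-1)$ is positive (the only possible sign factor is $(-1)^{(2j-1)-(2s-1)}=+1$) and the relative error is $O(k^3/n)$.

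Two smaller points. First, $\mu/(\lambda_0+\mu)$ is scale-invariant, so its equality with $\binom{m}{t}$ is a consequence of the eigenspace membership of $\chi_{\mathcal A_t}$ (equality in the Hoffman chain), not a normalization. Second, invariance under swaps inside atoms alone does not give that membership: $|S\cap\{1,2\}|-|S\cap\{3,4\}|$ lies in $V_1$ and is swap-invariant. You need the full stabilizer $S_2\wr S_m$, or the paper's reflection argument.

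Your Step 3 has a genuine gap. You correctly get $\chi_{\mathcal F}\perp V_1\oplus V_3\oplus\cdots\oplus V_{2t-1}$. But orthogonality to the odd eigenspaces alone is not a design-type condition on $(2s+1)$-subset degrees; that would need orthogonality to all of $V_1,\dots,V_j$. You also give no argument for the pairing claim. Frankl--Tokushige's structural argument is exactly what costs their exponential threshold, so invoking it together with ``degree regularity'' does not say what replaces it.

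The paper converts the spectral information into combinatorics through the inner distribution:
\begin{itemize}
\item $d_{2e+1}=0$ (eventown) and $d^\perp_{2e+1}=0$ (tightness) determine all $d_j$, provided the $t\times t$ matrix $(P_{2u-1}(2v))_{u,v}$ is nonsingular. This follows from the same dominant-permutation lemma.
\item Hence the distribution equals that of $\mathcal A_t$; in particular $d_{k-2}=t(m-t)$.
\item Take a member $e$ with at least $t(m-t)$ members at intersection $k-2$. Their symmetric differences with $e$ form a $4$-uniform eventown of that size, lying inside $\mathcal H_e$ and inside a maximal self-orthogonal code $C_0\supseteq\mathcal E$.
\item Maximal $4$-uniform eventowns are classified as an atomic part plus disjoint copies of the Fano-type families $\mathcal F_7,\mathcal F_8$. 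A counting argument using $m>6t+14$ then forces this family to be $\mathcal A\cap\mathcal H_e$ for some atomic $\mathcal A$.
\item Linearity of $C_0$ puts every union of two atoms into $C_0$. Self-orthogonality then forces every member of $\mathcal E$ to meet every atom evenly, so $\mathcal E=\mathcal A_t$.
\end{itemize}
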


Our main ingredient in the proof of the upper bound in
Theorem~\ref{th:main} is the sign pattern established in
Lemma~\ref{lm:key}.  In particular, all auxiliary variables arising
from the corresponding linear system have prescribed signs.  This
allows us to identify and estimate the leading terms of the relevant
determinants.  The second step is to prove that these estimates are
stable under the perturbations appearing in the original linear
system.

To prove uniqueness, we first determine the distance distribution of
an extremal family using the Delsarte linear-programming machinery.
We then embed the uniform eventown $\mathcal E_t$ into a non-uniform
eventown $\mathcal E$ by extending the linear span of its incidence
vectors to a maximal self-orthogonal binary code.  The distance
distribution yields a large $4$-uniform eventown $\mathcal E_2$ of a
specific form inside $\mathcal E$.  Since the uniformity $4$ is small
enough to permit a complete combinatorial classification, we can show
that $\mathcal E_2$ has the required atomic structure.  Finally, we
lift this structure from $\mathcal E_2$ to the original family
$\mathcal E_t$, completing the uniqueness proof.

\paragraph{Structure of the paper.}
Section~\ref{sec:basicsofLP} and Subsection~\ref{subsec:basicsofLP} contain the linear-programming
preliminaries used in the upper-bound and uniqueness parts of Theorem~\ref{th:main}, respectively.
Section~\ref{sec:proofOutline} gives an outline of the proof of the upper bound.  Section~\ref{sec:proof} contains the technical estimates
that form the main part of this proof.
Subsection~\ref{subsec:distdist} determines the distance distribution
of an extremal family, while Subsection~\ref{subsec:uniqueness} derives its atomic structure and
completes the uniqueness proof.

\section{The Delsarte linear programming approach in the Johnson scheme} \label{sec:basicsofLP}

For the standard theory of the Johnson scheme and its eigenmatrices,
see~\cite[Chapter~6]{GodsilMeagher2015Johnson}. Set $k=2t$ and
\[
    \mathcal X=\binom{[n]}{k}.
\]
For $0\leq j\leq k$, let $A_j$ be the matrix indexed by $\mathcal X\times \mathcal X$ with
\[
    (A_j)_{xy}=\mathbf 1_{\{|x\cap y|=j\}}.
\]
Let $G_{\mathrm{odd}}(n,k)$ be the graph on $\mathcal X$ in which two distinct
vertices are adjacent precisely when their intersection has odd size.
Every $2t$-uniform eventown is an independent set in this graph.

Following Frankl and Tokushige, consider the pseudo-adjacency matrix
\[
    M :=\sum_{s=1}^{t}a_{2s-1}A_{2s-1}.
\]
The matrices $A_0,\dots,A_k$ belong to the Bose--Mesner algebra of the
Johnson scheme and are simultaneously diagonalizable.  Write
\[
    \mathbb R^\mathcal X=V_0\oplus V_1\oplus\cdots\oplus V_k,
    \qquad
    V_0=\langle\mathbf 1\rangle,
\]
for the standard eigenspace decomposition. One has 
\[
\dim V_k = \binom{n}{k} - \binom{n}{k-1}.
\]

For every $i$ define
\[
x_i (S) = \begin{cases}
    1, \quad i \in S\\
    0, \quad i \notin S,
\end{cases}
\]
where $S \in \binom{[n]}{k}$. The following fact can be found for instance in~\cite{filmus2016orthogonal}.

\begin{lemma} \label{lm:eigenbase}
    The space $V_i$ is spanned by the functions $f_{\pi,i} : \binom{[n]}{k} \to \mathbb{R}$ defined as
    \[
    f_{\pi,i} := (x_{\pi(1)} - x_{\pi(2)}) (x_{\pi(3)} - x_{\pi(4)}) \dots (x_{\pi(2i-1)} - x_{\pi(2i)}),
    \]
    for all permutations $\pi \in S_n$.
\end{lemma}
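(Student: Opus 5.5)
The plan is to prove three things about the span $W_i:=\spann\{f_{\pi,i}:\pi\in S_n\}$: it lies in $V_i$, it is nonzero, and together with the ideal of lower-degree polynomials it exhausts everything. The description of $V_i$ to use is the standard one: $V_0\oplus\cdots\oplus V_i$ is the space $U_i$ of functions on $\binom{[n]}{k}$ that are restrictions of polynomials of degree at most $i$ in $x_1,\dots,x_n$, equivalently the span of the indicator functions $\mathbf 1_{\{T\subseteq S\}}$ with $|T|\le i$, which has dimension $\binom{n}{i}$. The space $V_i$ is then the orthogonal complement of $U_{i-1}$ inside $U_i$, for $i\le \min(k,n-k)$. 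This range includes $i\le k$, since $n>2k$ in our setting. The two key steps are (a) $f_{\pi,i}\in U_i$ and $f_{\pi,i}\perp U_{i-1}$, and (b) the $f_{\pi,i}$ span a space of dimension $\binom{n}{i}-\binom{n}{i-1}$.

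For (a), membership in $U_i$ is immediate, since $f_{\pi,i}$ is a product of $i$ linear forms. For orthogonality it suffices to check $\langle f_{\pi,i},\mathbf 1_{\{T\subseteq S\}}\rangle=0$ for every $|T|\le i-1$. Such a $T$ misses at least one of the $i$ disjoint pairs $\{\pi(2j-1),\pi(2j)\}$. The transposition swapping those two points fixes $\mathbf 1_{\{T\subseteq S\}}$ and the uniform measure on $\binom{[n]}{k}$, but it negates $f_{\pi,i}$. Hence the inner product equals its own negative, so it vanishes. This gives $W_i\subseteq V_i$.

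For (b), I would use that $V_i$ is an irreducible $S_n$-module, namely the Specht module $S^{(n-i,i)}$. This is part of the standard eigenspace decomposition cited from \cite{GodsilMeagher2015Johnson}: the Johnson scheme is multiplicity-free and $V_i$ is the $(n-i,i)$-isotypic component. The space $W_i$ is $S_n$-invariant by construction, since $\sigma$ maps $f_{\pi,i}$ to $f_{\sigma\pi,i}$. It is nonzero: choose $S\supseteq\{\pi(1),\pi(3),\dots,\pi(2i-1)\}$ disjoint from the even-indexed points, which is possible since $i\le k\le n-k$. Then $f_{\pi,i}(S)=1$. Irreducibility then forces $W_i=V_i$. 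In fact the $f_{\pi,i}$ are exactly the classical polytabloid generators of $S^{(n-i,i)}$, which is another way to see this.

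The main obstacle is the irreducibility input if one wants to avoid representation theory. A self-contained alternative is a dimension count: show that the $f_{\pi,i}$ indexed by standard Young tableaux of shape $(n-i,i)$ are linearly independent, via a triangularity argument on their leading monomials. There are $\binom{n}{i}-\binom{n}{i-1}$ of them, which matches $\dim V_i$ and yields $W_i=V_i$. I would cite irreducibility, as the paper cites \cite{filmus2016orthogonal}, rather than redo this combinatorics.
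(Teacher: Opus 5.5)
Your proof is correct. The paper gives no argument for this lemma: it quotes the fact from \cite{filmus2016orthogonal}. So the relevant comparison is with that source rather than with anything in the text.

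Your inclusion $W_i\subseteq V_i$ works as written. Each $f_{\pi,i}$ is a product of $i$ linear forms, hence lies in $U_i$. For orthogonality to $U_{i-1}$, a set $T$ with $|T|\le i-1$ misses one of the $i$ disjoint pairs, and the transposition of that pair fixes $\mathbf 1_{\{T\subseteq S\}}$ while negating $f_{\pi,i}$. This is the same sign-reversal trick the paper uses in Lemma~\ref{lm:eigeneven}.

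The reverse inclusion $V_i\subseteq W_i$ is the direction the paper actually needs there. It is what lets one pass from $\langle\chi_{\mathcal A_t},f_{\pi,i}\rangle=0$ to $E_i\chi_{\mathcal A_t}=0$ for odd $i$. Your step (b) supplies it correctly. The Bose--Mesner algebra is the full commutant of the multiplicity-free permutation module, so each $V_i$ is an (absolutely) irreducible $S_n$-module. A nonzero invariant subspace of it must therefore be all of $V_i$.

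The cited source argues differently, via harmonic multilinear polynomials. It writes down an explicit orthogonal basis of each $V_d$, indexed by standard Young tableaux and built from such products. That is close to your ``self-contained alternative'' and yields more: an explicit orthogonal basis, with no appeal to Young's rule. Your main route is shorter but treats multiplicity-freeness as a black box.

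Your triangularity variant also goes through as sketched. Use lex order favouring large indices. Every term of $\prod_j(x_{a_j}-x_{b_j})$ replaces some $b_j$ by a smaller $a_j$, so the leading monomial is $\pm x_{b_1}\cdots x_{b_i}$. Distinct tableaux have distinct second rows. Degree-$i$ monomials are linearly independent on the slice for $i\le\min(k,n-k)$, which gives the required independence.
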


Clearly every $f_{\pi,i}$ takes only values $0, \pm 1$.
It is also worth noting that functions $f_{\pi,i}$ have the smallest supports among all vectors in $V_i$ provided that $n > n_0(k,i)$, see~\cite{vorob2018minimum}.

The following celebrated theorem is widely known as Hoffman ratio bound~\cite{haemers2021hoffman}. We are going to apply it to $G = G_{\mathrm{odd}}$ and specific values of $A$.

\begin{theorem}
    Let $A$ be a symmetric pseudoadjacency matrix of an $N$-vertex graph $G$ with non-negative entries. Suppose that $(1,\dots,1)^T$ is an eigenvector of $A$. Then 
\begin{equation} \label{eq:Hoffmanita}
\alpha(G) \leq N \frac{-\lambda_{\min}}{\lambda_{\max} - \lambda_{\min}},
\end{equation}
where $\alpha(G)$ is the independence number of $G$.
\end{theorem}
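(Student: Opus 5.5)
The plan is the standard quadratic-form argument. Let $I\subseteq V(G)$ be an independent set of size $\alpha=\alpha(G)$ and let $v=\mathbf 1_I$ be its characteristic vector. I will compute $v^TAv$ in two ways and compare the results.

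\emph{Combinatorial side.} Since $A$ is a pseudo-adjacency matrix of $G$, we have $A_{xy}=0$ whenever $x\neq y$ are non-adjacent; this is the defining property, and for $M$ it holds because only odd intersections receive weight. I will also use that the diagonal vanishes; for $M$ this holds because $A_k$ does not appear, as $k$ is even. Every pair of distinct vertices of $I$ is non-adjacent, so $v^TAv=\sum_{x,y\in I}A_{xy}=0$.

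\emph{Spectral side.} Since $A$ is symmetric, it has an orthonormal eigenbasis. By hypothesis $\mathbf 1$ is an eigenvector; call its eigenvalue $\lambda_1$. Because the entries of $A$ are non-negative, Perron--Frobenius gives $\lambda_1=\lambda_{\max}$. In the application, $\lambda_{\max}$ is simply taken to be the eigenvalue on $V_0$; proving that it is the largest is exactly what the later lemmas will check. Decompose
\[
v=\frac{\alpha}{N}\mathbf 1+w,\qquad w\perp\mathbf 1,\qquad \|w\|^2=\alpha-\frac{\alpha^2}{N}.
\]
Since $w$ lies in the span of the remaining eigenvectors, whose eigenvalues are all at least $\lambda_{\min}$, we get
\[
0=v^TAv=\lambda_{\max}\frac{\alpha^2}{N}+w^TAw\ \geq\ \lambda_{\max}\frac{\alpha^2}{N}+\lambda_{\min}\Bigl(\alpha-\frac{\alpha^2}{N}\Bigr).
\]
Dividing by $\alpha>0$ and rearranging gives $\frac{\alpha}{N}(\lambda_{\max}-\lambda_{\min})\leq-\lambda_{\min}$. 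This is \eqref{eq:Hoffmanita}, provided $\lambda_{\max}>\lambda_{\min}$.

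\emph{Degenerate case.} I must check that $\lambda_{\max}>\lambda_{\min}$. The trace of $A$ is $0$, so $\lambda_{\min}\leq 0$. If $\lambda_{\max}=\lambda_{\min}$, then $A$ is a scalar matrix with zero trace, i.e.\ $A=0$, and the bound is vacuous. I will therefore assume that $A\neq0$ and has zero diagonal, which holds in our application.

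The only delicate point is the justification of these hypotheses: the zero diagonal, and that $\lambda_{\max}$ is the eigenvalue of $\mathbf 1$. Neither causes any difficulty for $M$. Equality requires $w$ to lie in the $\lambda_{\min}$-eigenspace, and this observation will be reused for uniqueness.
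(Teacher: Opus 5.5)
Your proof is correct and follows the same quadratic-form argument as the paper. You evaluate $\chi_I^{\mathsf T}A\chi_I=0$ and bound it below spectrally, with $\lambda_{\max}$ attained at $\mathbf 1$ because the entries are non-negative and the row sums are constant. You also make explicit two hypotheses the paper leaves implicit: a zero diagonal, which is needed for $\chi_I^{\mathsf T}A\chi_I=0$, and $\lambda_{\max}>\lambda_{\min}$, i.e.\ $A\neq 0$; this is a harmless and welcome clarification.
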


The proof is a one-line collection of several simple observations. Let $I$ be any independent set in $G$ and $\chi_I$ stands for its characteristic vector. Then
\begin{equation} \label{eq:main}
0 = (A\chi_I,\chi_I) = \sum_{i=1}^N c_i^2 \lambda_i \geq c_1^2 \lambda_{\max} + (c_2^2 + \dots + c_N^2) \lambda_{\min} = \frac{|I|^2}{N} \lambda_{\max} + \left (|I| - \frac{|I|^2}{N}  \right) \lambda_{\min},
\end{equation}
where $c_i$ are the coefficients in the decomposition of $\chi_I$ in the eigenbasis of $A$.
Here we use that since all entries are non-negative, the largest eigenvalue $\lambda_{\max}$ is achieved at the all-ones vector.
Also in the case of an edge-transitive graph Hoffman bound coincides with Lov{\'a}sz theta-bound~\cite{lovasz1979shannon}.

The reverse-engineering condition proposed in
\cite{FranklTokushige2016} is to make all nontrivial even eigenvalues equal.
The remaining task is then to prove that this common value is the least
eigenvalue and that the resulting ratio is the density of the atomic
construction.

Let $P_e(j)$ denote the eigenvalue of $A_j$ on $V_e$.  In the present
intersection-size indexing, the dual Hahn polynomial (Eberlein polynomial) can be written as
\begin{equation}
    P_e(j)
    =
    \sum_{q=0}^{e}
    (-1)^{e-q}
    \binom{e}{q}
    \binom{k-q}{j-q}
    \binom{n-k+q-e}{k-j+q-e},
    \qquad 0\leq e,j\leq k,
    \label{eq:Hahn}
\end{equation}
where a binomial coefficient is understood to be zero whenever its lower
index is outside the usual range.  Consequently,
\begin{equation}
    \lambda_e
    =
    \sum_{s=1}^{t}a_{2s-1}P_e(2s-1),
    \qquad 0\leq e\leq 2t.
    \label{eq:eigenvalues}
\end{equation}

In the next section we show that this approach works for $n \geq 10k^7$. 

\section{Outline of the proof of the upper bound} \label{sec:proofOutline}

Recall that $G_{\mathrm{odd}}(n,k)$ is the graph on $\mathcal X = \binom{[n]}{k}$ in which two distinct
vertices are adjacent precisely when their intersection has odd size. We also write $n = 2m$, $k = 2t$.
We construct a pseudoadjacency matrix $M$ by the reverse-engineering method as a sum 
\[
    M=M(a):=\sum_{s=1}^{t}a_{2s-1}A_{2s-1}
\]
satisfying $\lambda_2 = \lambda_4 = \dots = \lambda_{2t}=-1$.

The following structural lemma is the key part of our proof. A long and technical proof is placed in Section~\ref{sec:proof}.

\begin{lemma}\label{lm:key}
    Let $n > 10k^7$. Then the following hold
    \begin{enumerate}
        \item the system
        \[
        \lambda_{2r}(a_1,a_3,\ldots,a_{2t-1})=-1, \qquad 1\le r\le t,
        \]
has a unique solution $a_1,a_3,\ldots,a_{2t-1}$;
        \item $a_{2i-1} > 0$ for every $1 \leq i \leq t$;
        \item $\lambda_{2i-1} > 0$ for every $1 \leq i \leq t$.
    \end{enumerate}
\end{lemma}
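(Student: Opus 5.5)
We will work with the $t\times t$ linear system $\sum_{s=1}^{t} a_{2s-1}P_{2r}(2s-1)=-1$, $1\le r\le t$, with matrix $B=(P_{2r}(2s-1))_{r,s}$, and treat $n$ as the large parameter. The first step is to extract from \eqref{eq:Hahn} the leading behaviour of each entry. For fixed $e,j$ the summand with index $q$ has order $n^{k-j+q-e}$ with $q\le \min(e,j)$, so the dominant term is the one with $q=\min(e,j)$. For $e=2r$ and $j=2s-1$ this gives the following picture. When $2s-1<2r$, the leading term is $(-1)^{2r-2s+1}\binom{2r}{2s-1}\binom{n-k+2s-1-2r}{k-2r}$, which is negative and of order $n^{k-2r}$. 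When $2s-1>2r$, the leading term is $\binom{k-2r}{2s-1-2r}\binom{n-k}{k-2s+1}$, which is positive and of order $n^{k-2s+1}$. I would also write $P_{2r}(2s-1)=L_{r,s}\bigl(1+\varepsilon_{r,s}\bigr)$ and bound $|\varepsilon_{r,s}|\le Ck^{c}/n$ with explicit constants. The bound comes from comparing consecutive $q$-terms: their ratio is at most about $k^{3}/n$.

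The second step is to guess the sign pattern and the orders of magnitude of the solution, and then prove them by scaling. Heuristically $a_{2s-1}$ should behave like $c_s\, n^{-(k-2s+1)}\cdot(\text{something})$. I would rescale both rows and columns so that the rescaled matrix $\tilde B=D_1BD_2$ becomes a fixed, $n$-independent, essentially triangular matrix $B_0$ plus a perturbation $E$ with $\|E\|\le Ck^{c}/n$. Concretely, I would take row $r$ scaled by $n^{-(k-2r)}$ and column $s$ scaled by $n^{k-2s+1}$. After this scaling, the entries with $2s-1<2r$ are $O(1)$, and those with $2s-1>2r$ are $O(n^{2r-2s+1})$, which vanishes. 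So $B_0$ is lower triangular with explicit entries $(-1)\binom{2r}{2s-1}/(k-2r)!$ up to normalisation. Its diagonal entries $2s-1<2s$ lie in the lower part, so the main diagonal is $-\binom{2r}{2r-1}=-2r\neq 0$, scaled. I would solve $B_0\tilde a=-\tilde{\mathbf 1}$ exactly by forward substitution. A binomial identity (essentially an alternating sum $\sum_s\binom{2r}{2s-1}x_s$) should give a closed form for $\tilde a^{(0)}$ with all entries strictly positive and bounded below by something like $1/k!$-type quantities. This yields part 1 for large $n$, since $B$ is invertible as a perturbation of an invertible triangular matrix. It also yields part 2, once the perturbation is controlled by Cramer's rule or a Neumann series: $\|\tilde a-\tilde a^{(0)}\|\le \|B_0^{-1}\|\,\|E\|\,\|\tilde a\|$. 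I expect the paper's remark about "leading terms of determinants" to correspond exactly to applying Cramer's rule with the leading term of each determinant identified by sign.

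The third step handles part 3. Once the $a_{2s-1}$ are known to have the stated order and sign, I would evaluate $\lambda_{2i-1}=\sum_s a_{2s-1}P_{2i-1}(2s-1)$ in the same scaled coordinates. The leading contribution is an explicit positive quantity: odd $e$ against odd $j$ gives the sign $(-1)^{e-\min(e,j)}$, which is $+$ on the dominant terms. The remaining terms are dominated by it up to a factor $Ck^{c}/n$.

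The main obstacle is quantitative. To get the threshold $n>10k^7$ rather than $n>n_0(k)$, one needs bounds on $\|B_0^{-1}\|$ and on the gap between the leading positive solution and zero that are only polynomial in $k$. Triangular inverses with binomial entries can blow up exponentially, so I would first try to find the exact closed form of $B_0^{-1}$ through the binomial inversion $\sum(-1)^{\cdot}\binom{2r}{2s-1}$. The hope is that the relative perturbation of each $a_{2s-1}$ can be bounded coordinatewise, giving relative error at most $k^{7}/n$, instead of being controlled in norm.
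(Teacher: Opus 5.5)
\textbf{The gap is in part 2.} Your proposal for part 2 uses a wrong dominant balance, and the perturbation step then cannot be rescued.

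\emph{The scaling.} As written it is inconsistent. Multiplying column $s$ by $n^{k-2s+1}$ makes the entries with $s\le r$ of order $n^{k-2s+1}$, not $O(1)$. The picture you describe (lower part $O(1)$, upper part $O(n^{2r-2s+1})$) comes from the row scaling alone. With that scaling the right-hand side becomes $-(n^{-(k-2r)})_r$, whose entries range from $n^{-(k-2)}$ up to $1$. Forward substitution in the lower-triangular $B_0$ then gives $a^{(0)}_{2s-1}\asymp n^{2s-2t}$. The true solution is $a_{2s-1}\asymp n^{s-t}$; your heuristic $n^{-(k-2s+1)}$ is also off.

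\emph{A concrete check.} Already for $t=2$ one computes exactly
\[
a_1=\frac{1}{2(n-8)},\qquad a_3=\frac{n-10}{4(n-8)},
\]
whereas $B_0$ gives $a_1^{(0)}=n^{-2}$.

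\emph{Why it fails.} In row $r<t$ the superdiagonal term you put into $E$ is
\[
P_{2r}(2r+1)\,a_{2r+1}\asymp n^{k-2r-1}\cdot n^{r+1-t}=n^{t-r}.
\]
This is the same order as the diagonal term $P_{2r}(2r-1)a_{2r-1}$ and far larger than the right-hand side. So the bound $\|\tilde a-\tilde a^{(0)}\|\le\|B_0^{-1}\|\,\|E\|\,\|\tilde a\|$, with $\|\tilde a\|\asymp a_{2t-1}\approx 1/(2t)$, allows coordinate errors of order $\|B_0^{-1}\|/n$. These swamp every $a^{(0)}_{2s-1}$ with $s<t$. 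The coordinatewise relative control you hope for is false: the true relative discrepancy is of order $n^{t-s}$. No closed form for $B_0^{-1}$ can repair this.

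\textbf{The missing idea} is the correct leading-order system. Write $a_{2s-1}=n^{s-t}\hat a_s$. Then every row $r<t$ is dominated by its diagonal and superdiagonal terms, both of order $n^{t-r}$, and the right-hand side matters only in row $t$. To leading order,
\[
w_{2r,2r-1}\hat a_r+w_{2r,2r+1}\hat a_{r+1}=0\quad(r<t),\qquad w_{2t,2t-1}\hat a_t=-1 .
\]
This is an upper bidiagonal system, solved backwards: $\hat a_t=\frac{1}{2t}$ and $\hat a_r=\frac{(k-2r)^2}{2r}\hat a_{r+1}$. Positivity comes from $w_{2r,2r-1}<0<w_{2r,2r+1}$.

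This is exactly what the paper extracts with Cramer's rule. $\det D$ is dominated by the diagonal product, so your triangular picture is fine for part 1 qualitatively. But $\det D_s$ is dominated by the shifted permutation $\pi_s$. It uses the diagonal for $v<s$, the $-1$ in row $t$, and the superdiagonal entries $(v-1,v)$ for $v>s$, which are precisely the entries in your ``negligible'' $E$.

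\textbf{The polynomial threshold.} It comes from Lemma~\ref{lm:matr_det}. That lemma expands a determinant over permutations and charges each competing permutation by its depth, using only the consecutive-ratio bounds $K_0\approx k^2$, $L_0\approx k^4/4$. This also sidesteps your worry about $\|B_0^{-1}\|$, which is justified. The inverse of $\bigl(\binom{2r}{2s-1}\bigr)_{r\ge s}$ has Bernoulli-type entries of factorial size in $k$. So even part 1 would not reach $n>10k^7$ through a norm perturbation bound.

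\textbf{Part 3.} This is fine once part 2 is established, and simpler than you make it. Each $P_{2i-1}(2s-1)$ is positive by Corollary~\ref{col:est}, so all summands of $\lambda_{2i-1}$ are positive and no dominance argument is needed.
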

    
%Сетап такой: уравнения про лямбды и оны все равны -1. 

\begin{lemma}\label{lm:eigeneven}
   Let $n = 2m$. Then the characteristic vector $\chi_{\mathcal A_t}$ of an atomic family $\mathcal A_t$ belongs to
    \[
    V_0\oplus V_2\oplus\cdots\oplus V_{2t}; 
    \]
    i.e. to the direct sum of all even-indexed eigenspaces.
\end{lemma}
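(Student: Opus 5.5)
The plan is to show directly that $\chi_{\mathcal A_t}$ is orthogonal to $V_i$ for every odd $i$. By Lemma~\ref{lm:eigenbase} it suffices to prove that
\[
\langle \chi_{\mathcal A_t}, f_{\pi,i}\rangle=\sum_{S\in\mathcal A_t} f_{\pi,i}(S)=0
\]
for every permutation $\pi$ and every odd $i$. Since the $V_e$ are mutually orthogonal and span $\mathbb R^{\mathcal X}$, this orthogonality places $\chi_{\mathcal A_t}$ in $V_0\oplus V_2\oplus\cdots\oplus V_{2t}$.

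Fix $\pi$ and an odd $i$, and write $a_l=\pi(2l-1)$, $b_l=\pi(2l)$ for $1\le l\le i$. On the set $D=\{a_1,b_1,\dots,a_i,b_i\}$ I build an auxiliary graph $\Gamma$ with two kinds of edges:
\begin{itemize}
\item \emph{factor edges} $a_lb_l$;
\item \emph{atom edges} $uv$ whenever $\{u,v\}=p_j$ is one of the pairs of the atomic partition.
\end{itemize}
Here $n=2m$, so there is no remainder $R$ and every point has an atom partner. Each vertex has degree at most one in each edge type. Hence every component of $\Gamma$ is a path or a cycle whose edges alternate between the two types.

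For $S\in\mathcal A_t$ we have $f_{\pi,i}(S)\ne0$ exactly when $S$ contains precisely one endpoint of each factor edge. The value is then $\prod_l(\pm1)$, with sign $+1$ on factor $l$ when $a_l\in S$. Atomicity forces $S$ to contain both or neither endpoint of each atom pair. If a factor edge coincides with an atom pair, then $f_{\pi,i}\equiv0$ on $\mathcal A_t$ and we are done.

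Otherwise, the number of factor edges summed over all components is $i$, which is odd. Hence some component $C$ contains an odd number $L$ of factor edges. I then define an involution $\Phi$ on the sets $S\in\mathcal A_t$ with $f_{\pi,i}(S)\neq0$: flip the choice on $C$. Concretely, on each factor edge of $C$ replace the chosen endpoint by the other one. At an endpoint of a path component whose atom partner lies outside $D$, toggle that partner together with it.

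The checks that $\Phi$ is a well-defined sign-reversing involution are as follows.
\begin{itemize}
\item Along $C$ the alternation of factor and atom edges means the selection on $C$ is determined by a single binary choice. Flipping it yields another consistent, atomic choice.
\item The flip preserves $|S|$. On a cycle, exactly $L$ points of $C$ lie in $S$ both before and after. On a path, the two endpoints are pendant at factor edges, and their outside partners are swapped in and out together, so the count is preserved.
\item The flip fixes everything outside $C\cup\{\text{partners of endpoints}\}$, so $\Phi$ is an involution.
\item It changes the sign of exactly $L$ factors, so $f_{\pi,i}(\Phi S)=(-1)^Lf_{\pi,i}(S)=-f_{\pi,i}(S)$. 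The terms therefore cancel in pairs and the sum is $0$.
\end{itemize}

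The main obstacle I expect is the careful bookkeeping for path components. I must check that a path with an odd number of factor edges really has both ends on factor edges, with atom partners outside $D$ and distinct from each other, so that the toggling keeps $S$ atomic of size $2t$. A path could instead end on an atom edge; in that case its end point's partner is inside $D$ by definition, and I would simply include it in the path. The alternating structure then makes the terminal edges factor edges whenever the component is maximal.
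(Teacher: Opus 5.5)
Your proof is correct and follows essentially the paper's argument. The paper builds the same alternating atom/bracket graph (on all of $[n]$ rather than on $D$), picks a component with an odd number of bracket edges, and reflects it by a permutation of the ground set that preserves $\mathcal A_t$ and reverses every bracket of that component; on the members of $\mathcal A_t$ where $f_{\pi,i}\neq 0$, this reflection acts exactly as your flip $S\mapsto S\,\triangle\,(V(C)\cup\{\text{outer partners}\})$. The worry in your last paragraph is moot: every vertex of $D$ lies on exactly one factor edge, so path components of $\Gamma$ always end on factor edges, and their atom partners lie outside $D$ and are distinct (otherwise $C$ would close into a cycle).
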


\begin{proof}
    By Lemma~\ref{lm:eigenbase} we need to show that 
    \[
    \langle \chi_{\mathcal A_t}, f_{\pi,i} \rangle = 0
    \]
    for every $\pi \in S_n$ and every odd $i$. 

    Now fix $\pi$ and $i$ and consider a graph $G(\pi,i)$ on the vertex set $[n]$ with $m$ red edges produced by the atoms of ${\mathcal A_t}$ and $i$ blue edges produced by the brackets of $f(\pi,i)$. The degree of every vertex in $G(\pi,i)$ is at most 2, so $G(\pi,i)$ is the union of paths and cycles (note that a $2$-cycle consisting of a red and a blue edge may occur). Since $i$ is odd, there is a connected component $C_{odd}$ of $G(\pi,i)$ which contains an odd number of blue edges. 
    Now choose a direction in the path or a cycle $C_{odd}$ and orient every blue edge according to this direction.
    Without loss of generality we can assume that every brackets $\pi$ subtracts the end of the corresponding blue edge from its origin.
    
    Now ``reflect'' this component $C_{odd}$ as shown in Figure~\ref{fig:path-cycle-reflection}.
    Such a reflection $\rho$ preserves both sets of red and blue edges, so 
    $f_{\pi,i} = \pm f_{\rho(\pi),i}$. 
    Since a reflection changes the orientation in the plane, $\rho$ changes sign of every bracket that corresponds to the component $C_{odd}$. This means that 
    \[
    \langle \chi_{\mathcal A_t}, f_{\pi,i} \rangle = \langle \chi_{\mathcal A_t}, -f_{\pi,i} \rangle
    \]
    and thus $\langle \chi_{\mathcal A_t}, f_{\pi,i} \rangle = 0$.

\begin{figure}[ht]
    \centering
       \begin{minipage}[c]{0.50\textwidth}
        %\centering
        \begin{tikzpicture}[
            x=0.92cm,
            y=0.85cm,
            point/.style={circle, fill=black!75, inner sep=2.2pt},
            rededge/.style={red!65, line width=1.2pt},
            blueedge/.style={blue!65, line width=1.2pt},
            label/.style={font=\normalsize},
            perm/.style={font=\normalsize}
        ]

        \node[point] (p1)  at (0,0) {};
        \node[point] (p2)  at (1,0) {};
        \node[point] (p7)  at (2,0) {};
        \node[point] (p8)  at (3,0) {};
        \node[point] (p3)  at (4,0) {};
        \node[point] (p4)  at (5,0) {};
        \node[point] (p18) at (6,0) {};
        \node[point] (p17) at (7,0) {};

        \draw[rededge]  (p1) -- (p2);
        \draw[blueedge] (p2) -- (p7);
        \draw[rededge]  (p7) -- (p8);
        \draw[blueedge] (p8) -- (p3);
        \draw[rededge]  (p3) -- (p4);
        \draw[blueedge] (p4) -- (p18);
        \draw[rededge]  (p18) -- (p17);

        \node[label, above=5pt of p1]  {$\pi(4) = 1$};
        \node[label, below=5pt of p2]  {$\pi(1) = 2$};
        \node[perm,  above=5pt of p7]  {$\pi(2) = 7$};
        \node[label, below=5pt of p8]  {$\pi(8) = 8$};
        \node[perm,  above=5pt of p3]  {$\pi(7) = 3$};
        \node[label, below=5pt of p4]  {$\pi(6) = 4$};
        \node[perm,  above=5pt of p18] {$\pi(5) = 18$};
        \node[label, below=5pt of p17] {$\pi(3) = 17$};

        \draw[<->, >=Stealth, line width=1pt, black!60]
            (2,1.15) to[out=65,in=115,looseness=1.25] (5.3,1.15);

        \end{tikzpicture}
    \end{minipage}
    \hspace*{-0.9cm}%
    \begin{minipage}[c]{0.34\textwidth}
        %\centering
        \begin{tikzpicture}[
            x=0.95cm,
            y=0.95cm,
            point/.style={circle, fill=black!75, inner sep=2.2pt},
            rededge/.style={red!65, line width=1.2pt},
            blueedge/.style={blue!65, line width=1.2pt},
            label/.style={font=\normalsize},
            perm/.style={font=\normalsize}
        ]

        % Regular hexagon vertices
        \coordinate (A) at (-2,0);
        \coordinate (B) at (-1,{sqrt(3)});
        \coordinate (C) at ( 1,{sqrt(3)});
        \coordinate (D) at ( 2,0);
        \coordinate (E) at ( 1,{-sqrt(3)});
        \coordinate (F) at (-1,{-sqrt(3)});

        % Dashed symmetry axis
        \draw[densely dashed, black!40] (0,2.3) -- (0,-2.3);

        % Vertices
        \node[point] (pA) at (A) {};
        \node[point] (pB) at (B) {};
        \node[point] (pC) at (C) {};
        \node[point] (pD) at (D) {};
        \node[point] (pE) at (E) {};
        \node[point] (pF) at (F) {};

        % Alternating coloured edges
        \draw[rededge]  (pA) -- (pB);
        \draw[blueedge] (pB) -- (pC);
        \draw[rededge]  (pC) -- (pD);
        \draw[blueedge] (pD) -- (pE);
        \draw[rededge]  (pE) -- (pF);
        \draw[blueedge] (pF) -- (pA);

        % Labels
        \node[label] at (-2.98,0.12) {$\pi(10)=2$};
        \node[label] at (-1.18,{sqrt(3)+0.28}) {$\pi(1)=1$};
        \node[label] at ( 2.93,0.05) {$\pi(7)=6$};

        \node[perm] at (1.72,{sqrt(3)+0.28}) {$\pi(2)=5$};
        \node[perm] at (-1.45,{-sqrt(3)-0.35}) {$\pi(9)=12$};
        \node[perm] at ( 1.35,{-sqrt(3)-0.35}) {$\pi(8)=11$};

        \end{tikzpicture}
    \end{minipage}
    \caption{Reflection of a path (left) and of a cycle (right) in Lemma~\ref{lm:eigeneven}.}
    \label{fig:path-cycle-reflection}
\end{figure}

\end{proof}

\begin{remark}
    An alternative proof of Lemma~\ref{lm:eigeneven} can be given using representation theory.
\end{remark}

\begin{proof}[Proof of Theorem~\ref{th:main}]

By Lemma~\ref{lm:key}(ii) all $a_i$ are nonnegative and so the largest eigenvalue is $\lambda_0$. By the Perron--Frobenius theorem $\lambda_0$ is simple.
By Lemma~\ref{lm:key}(iii) all eigenvalues except $\lambda_2 = \lambda_4 = \dots = \lambda_{2t}$ are positive; so the only negative (and hence it is the smallest) eigenvalue is $\lambda_2$.

Finally, by Lemma~\ref{lm:eigeneven} the characteristic vector of an atomic family belongs to the direct sum of the eigenspaces that correspond to the maximal and the minimal eigenvalues.
This means that the only inequality in~\eqref{eq:main} turns into equality and thus the bound~\eqref{eq:Hoffmanita} is tight.
\end{proof}

\section{Proof of Lemma~\ref{lm:key}} \label{sec:proof}

Put $\alpha := 0.1$ and $o := 7$ and recall that $n > \alpha^{-1}k^o$.

\subsection{Preliminary definitions and auxiliary lemmas}

Define
\[
G_{x,y,q} := \binom{x}{q} \binom{k-q}{y-q} \binom{n-k+q-x}{k-y+q-x}.
\]

\begin{lemma}\label{lm:sum_est}
Let $0\leq x,y\leq k$, and let $z=\min(x,y)$. Then
\[
\left|
\frac{P_x(y)}{(-1)^{x-z}G_{x,y,z}}-1
\right|
=
\left|
\frac{\sum_{q=0}^{x}(-1)^{x-q}G_{x,y,q}}
     {(-1)^{x-z}G_{x,y,z}}-1
\right|
\leq 4\alpha k^{3-o}.
\]
\end{lemma}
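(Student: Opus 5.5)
The plan is to observe that the alternating sum defining $P_x(y)$ is dominated by its last nonzero term, and that the terms decay geometrically as $q$ decreases. First, note that $G_{x,y,q}=0$ for $q>z=\min(x,y)$: if $q>x$ then $\binom{x}{q}=0$, and if $q>y$ then $\binom{k-q}{y-q}=0$. So
\[
P_x(y)=\sum_{q=0}^{z}(-1)^{x-q}G_{x,y,q},
\]
and it suffices to show that
\[
\sum_{q<z}G_{x,y,q}\le 4\alpha k^{3-o}\,G_{x,y,z}.
\]
We also check that $G_{x,y,z}>0$. With $q=z$, the lower index of the third binomial is $K=k-\max(x,y)\ge 0$. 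The upper index is $N=n-k+z-x\ge n-2k\ge K$. The first two factors are clearly positive.

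The key step is a ratio bound: for $1\le q\le z$,
\[
G_{x,y,q-1}\le r\,G_{x,y,q},\qquad r:=\frac{k^2(k+1)}{n-2k}.
\]
We compare the three factors one at a time.
\begin{itemize}
\item $\binom{x}{q-1}/\binom{x}{q}=q/(x-q+1)\le k$.
\item $\binom{k-q+1}{y-q+1}/\binom{k-q}{y-q}=(k-q+1)/(y-q+1)\le k+1$.
\item For the third factor, write $N=n-k+q-x$ and $K=k-y+q-x$. Then $\binom{N-1}{K-1}/\binom{N}{K}=K/N\le k/(n-2k)$ whenever $K\ge1$.
\end{itemize}
If $K\le 0$, the factor at $q-1$ has negative lower index and vanishes, so the inequality holds trivially. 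Either way, this gives $G_{x,y,q-1}\le r\,G_{x,y,q}$. Iterating from $q=z$ downward yields $G_{x,y,q}\le r^{z-q}G_{x,y,z}$.

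Finally, we sum the geometric series. For $n>\alpha^{-1}k^o$ with $o=7$ and $k\ge2$, we have $2k\le n/2$ and $k+1\le \tfrac32k$, so
\[
r\le \frac{k^2\cdot\tfrac32 k}{n/2}=\frac{3k^3}{n}<3\alpha k^{3-o}\le \tfrac12 .
\]
Therefore
\[
\left|\frac{P_x(y)}{(-1)^{x-z}G_{x,y,z}}-1\right|\le \sum_{j\ge1}r^j=\frac{r}{1-r}\le 2r.
\]
To land on the stated constant $4\alpha k^{3-o}$, we use $r\le 2\alpha k^{3-o}$. This needs the slightly tighter estimate $k^2(k+1)/(n-2k)\le 2k^3/n$, which is equivalent to $(k+1)n\le 2k(n-2k)$, i.e. $n\ge 4k^2/(k-1)$. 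That holds comfortably since $n>10k^7$.

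The main obstacle is bookkeeping rather than any deep step: one must handle the vanishing-binomial conventions correctly, so that the ratio argument never divides by zero, and confirm that $G_{x,y,z}$ is the nonvanishing term. Starting from $q=z$ and going downward, with the observation that once the third lower index $K$ becomes negative it stays negative, handles both points.
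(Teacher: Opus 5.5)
Your proof is correct and follows essentially the same route as the paper. Both arguments bound the ratio of consecutive terms $G_{x,y,q-1}/G_{x,y,q}$ by at most $2k^3/n\le 2\alpha k^{3-o}$ and then sum the geometric series to reach $4\alpha k^{3-o}$.

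The only differences are cosmetic. You handle the vanishing third binomial by a case check on its lower index, where the paper restricts the summation range to $q\ge\max(0,x+y-k)$. You also use a looser per-factor bound ($k+1$ and $k/(n-2k)$), which you then correctly tighten to $2k^3/n$.
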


\begin{proof}
First, if $x<q$, $y<q$, or $x+y-k >q$, then the first, second, or
third binomial coefficient, respectively, in $G_{x,y,q}$ vanishes.
We now estimate the ratio of consecutive terms $G_{x,y,q}$ and
$G_{x,y,q+1}$ for
\[
\max(0,x+y-k) \leq q\leq z-1.
\]
We have
\[
\frac{G_{x,y,q}}{G_{x,y,q+1}}
 =
\frac{
\binom{x}{q}
\binom{k-q}{y-q}
\binom{n-k+q-x}{k-y+q-x}
}{
\binom{x}{q+1}
\binom{k-(q+1)}{y-(q+1)}
\binom{n-k+q+1-x}{k-y+q+1-x}
} =
\frac{\binom{x}{q}}{\binom{x}{q+1}}
\cdot
\frac{\binom{k-q}{y-q}}
     {\binom{k-(q+1)}{y-(q+1)}}
\cdot
\frac{\binom{n-k+q-x}{k-y+q-x}}
     {\binom{n-k+q+1-x}{k-y+q+1-x}}.
\]
Using
\[
\binom{N}{K}
=
\binom{N}{K+1}\frac{K+1}{N-K}
\qquad\text{and}\qquad
\binom{N}{K}
=
\binom{N+1}{K+1}\frac{K+1}{N+1},
\]
we obtain
\[
\frac{G_{x,y,q}}{G_{x,y,q+1}}
=
\frac{q+1}{x-q}
\cdot
\frac{k-q}{y-q}
\cdot
\frac{k-y+q+1-x}{n-k+q+1-x}
\leq
k\cdot k\cdot\frac{2k}{n}
\leq 
2\alpha k^{3-o}.
\]
It follows that
\[
\left|
\frac{\sum_{q=0}^{x}(-1)^{x-q}G_{x,y,q}}
     {(-1)^{x-z}G_{x,y,z}}-1
\right|
=
\left|
\frac{\sum_{q=\max(0,x+y-k)}^{z}(-1)^{x-q}G_{x,y,q}}
     {(-1)^{x-z}G_{x,y,z}}-1
\right| 
\leq
\]
\[
\sum_{r=1}^{z-\max(0,x+y-k)}
\left|
\frac{G_{x,y,z-r}}{G_{x,y,z}}
\right| 
\leq
\sum_{r=1}^{z-\max(0,x+y-k)}(2\alpha k^{3-o})^r 
\leq
\frac{2\alpha k^{3-o}}{1-2\alpha k^{3-o}}
\leq
4\alpha k^{3-o}.
\]
\end{proof}

Define
\[
w_{x,y}
=
\begin{cases}
\displaystyle
\binom{k-x}{y-x}\frac{1}{(k-y)!},
& x\leq y,\\[2mm]
\displaystyle
(-1)^{x-y}\binom{x}{y}\frac{1}{(k-x)!},
& x>y.
\end{cases}
\]

\begin{lemma}\label{lm:binom_est}
Let $0\leq x,y\leq 2t$, and let $z=\min(x,y)$. Then
\[
\left|
\frac{(-1)^{x-z}G_{x,y,z}}
     {w_{x,y}n^{k-\max(x,y)}}-1
\right|
\leq
2\alpha k^{2-o}.
\]
\end{lemma}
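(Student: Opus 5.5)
Split into the two cases $x\le y$ (so $z=x$) and $x>y$ (so $z=y$), write $G_{x,y,z}$ explicitly in each, and observe that everything except the third binomial is an exact match with the corresponding factor in $w_{x,y}$. The claim then reduces to showing that the third binomial is $n^{k-\max(x,y)}/(k-\max(x,y))!$ up to a factor $1\pm 2\alpha k^{2-o}$.

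\emph{Case $x\le y$, $z=x$.} Here $(-1)^{x-z}=1$ and
\[
G_{x,y,x}=\binom{x}{x}\binom{k-x}{y-x}\binom{n-k}{k-y}.
\]
Dividing by $w_{x,y}=\binom{k-x}{y-x}/(k-y)!$, we must show $\bigl|\binom{n-k}{k-y}(k-y)!/n^{k-y}-1\bigr|\le 2\alpha k^{2-o}$.

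\emph{Case $x>y$, $z=y$.} The sign $(-1)^{x-y}$ matches the sign in $w_{x,y}$, and
\[
G_{x,y,y}=\binom{x}{y}\binom{k-y}{0}\binom{n-k+y-x}{k-x}.
\]
Dividing by $w_{x,y}$, we must show $\bigl|\binom{n-k+y-x}{k-x}(k-x)!/n^{k-x}-1\bigr|\le 2\alpha k^{2-o}$.

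Both cases therefore reduce to a single estimate. Let $N=n-c$ with $0\le c\le 2k$ (in the first case $c=k$, in the second $c=k+x-y\le 2k$) and let $d=k-\max(x,y)\le k$. We need to bound $\binom{N}{d}d!/n^d=\prod_{i=0}^{d-1}(1-(c+i)/n)$.

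\begin{itemize}
\item Upper side: each factor is at most $1$, so the product is at most $1$.
\item Lower side: by the Bernoulli/Weierstrass product inequality, the product is at least
\[
1-\sum_{i<d}\frac{c+i}{n}\ \ge\ 1-\frac{d(c+d)}{n}\ \ge\ 1-\frac{3k^2}{n}.
\]
\item Converting: with $n>\alpha^{-1}k^o$ this gives a deviation of at most $3\alpha k^{2-o}$.
\end{itemize}

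This is the main obstacle: the crude bound gives $3\alpha k^{2-o}$, not the required $2\alpha k^{2-o}$, so the constant must be tightened. I would sharpen the sum using $c+i\le 2k-1$, which is valid because $c=k+x-y$ with $y\ge0$ and $i\le d-1$, so $c+i\le k+x-y+k-x-1$. A finer bound follows: $\sum_{i<d}(c+i)\le d\,(k+x-y)+\binom d2$. With $d=k-x$ this is at most $(k-x)(k+x)+(k-x)^2/2\le 2k^2$ after maximizing over $x$; a similar computation handles the first case, where the sum is $\le dk+d^2/2\le 1.5k^2$.

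So the total deviation is at most $2k^2/n<2\alpha k^{2-o}$, as required. When $d=0$ the ratio is exactly $1$. Degenerate binomials (negative lower index) do not occur, since $d\ge0$ and $N\ge n-2k>d$.
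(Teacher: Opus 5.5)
Your proof is correct and is essentially the paper's argument: the same case split reduces the ratio to $\prod_{i<d}\bigl(1-(c+i)/n\bigr)$, which is at most $1$ and is bounded below by a Bernoulli-type inequality. The ``obstacle'' you flag comes only from bounding $c\le 2k$ and $d\le k$ separately. Since every factor has $c+i\le 2k-1$ (equivalently $c+d=2k-y\le 2k$), the paper directly uses $\bigl((n-2k)/n\bigr)^k\ge 1-2k^2/n$.
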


\begin{proof}
We consider the two cases $x\leq y$ and $x>y$.
\begin{itemize}
    \item Suppose that $x\leq y$. Then $x=z$, and
    \[
    G_{x,y,x}
    =
    \binom{k-x}{y-x}\binom{n-k}{k-y}.
    \]
    Hence
    \[
    \frac{(-1)^{x-z}G_{x,y,z}}
         {w_{x,y}n^{k-\max(x,y)}}
    =
    \frac{\prod_{i=0}^{k-y-1}(n-k-i)}{n^{k-y}}.
    \]
    This quantity is clearly at most one, while
    \[
    \frac{\prod_{i=0}^{k-y-1}(n-k-i)}{n^{k-y}}
    \geq
    \left(\frac{n-2k}{n}\right)^k
    \geq
    1-\frac{2k^2}{n}
    \geq
    1-2\alpha k^{2-o}.
    \]

    \item Suppose that $x>y$. Then $z=y$, and
    \[
    G_{x,y,y}
    =
    \binom{x}{y}\binom{n-k-x+y}{k-x}.
    \]
    Hence
    \[
    \frac{(-1)^{x-z}G_{x,y,z}}
         {w_{x,y}n^{k-\max(x,y)}}
    =
    \frac{\prod_{i=0}^{k-x-1}(n-k-x+y-i)}{n^{k-x}}.
    \]
    Again, this quantity is clearly at most one, since
    $n-k-x+y\leq n$, while
    \[
    \frac{\prod_{i=0}^{k-x-1}(n-k-x+y-i)}{n^{k-x}}
    \geq
    \left(
        \frac{n-k-x+y-(k-x)}{n}
    \right)^k 
    \geq
    \left(\frac{n-2k}{n}\right)^k
    \geq
    1-\frac{2k^2}{n}
    \geq
    1-2\alpha k^{2-o}.
    \]
\end{itemize}
\end{proof}

\begin{corollary}\label{col:est}
Combining Lemmas~\ref{lm:sum_est} and~\ref{lm:binom_est}, we obtain
\[
\left|
\frac{P_x(y)}{w_{x,y}n^{k-\max(x,y)}}-1
\right|
\leq
6\alpha k^{3-o}.
\]
\end{corollary}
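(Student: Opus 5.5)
The corollary follows by multiplying the two estimates. Write
\[
\frac{P_x(y)}{w_{x,y}n^{k-\max(x,y)}}
=
\frac{P_x(y)}{(-1)^{x-z}G_{x,y,z}}\cdot
\frac{(-1)^{x-z}G_{x,y,z}}{w_{x,y}n^{k-\max(x,y)}}
=:(1+\varepsilon_1)(1+\varepsilon_2).
\]
Lemma~\ref{lm:sum_est} gives $|\varepsilon_1|\leq 4\alpha k^{3-o}$, and Lemma~\ref{lm:binom_est} gives $|\varepsilon_2|\leq 2\alpha k^{2-o}$. The factorization requires $G_{x,y,z}\neq0$ and $w_{x,y}\neq 0$.

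For $w_{x,y}$, nonvanishing is clear, since $\binom{k-x}{y-x}\geq1$ for $x\leq y$ and $\binom{x}{y}\geq1$ for $x>y$. For $G_{x,y,z}$, the explicit formulas in the proof of Lemma~\ref{lm:binom_est} show it is a product of positive binomials once $n\geq 2k$. In that proof the ratio is also shown to be positive, at least $1-2\alpha k^{2-o}>0$.

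Expanding the product gives
\[
\bigl|(1+\varepsilon_1)(1+\varepsilon_2)-1\bigr|
\leq |\varepsilon_1|+|\varepsilon_2|+|\varepsilon_1\varepsilon_2|
\leq 4\alpha k^{3-o}+2\alpha k^{2-o}+8\alpha^2k^{5-2o}.
\]

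It remains to show that the last two terms are at most $2\alpha k^{3-o}$. We have $2\alpha k^{2-o}\leq \alpha k^{3-o}$ as soon as $k\geq2$, which holds since $k=2t\geq2$. We also have $8\alpha^2 k^{5-2o}\leq \alpha k^{3-o}$, because this is equivalent to $8\alpha k^{2-o}\leq1$. With $\alpha=0.1$ and $o=7$, this reads $0.8\,k^{-5}\leq 1$, which holds.

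Summing gives the bound $6\alpha k^{3-o}$. There is no real obstacle here: the only points needing care are the nonvanishing of the denominators and absorbing the lower-order and cross terms into the constant, both of which are checked above.
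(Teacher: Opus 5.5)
Your proof is correct and is essentially the argument the paper leaves implicit ("combining" the two lemmas). You factor the ratio through $(-1)^{x-z}G_{x,y,z}$, multiply the bounds $4\alpha k^{3-o}$ and $2\alpha k^{2-o}$, and absorb both $2\alpha k^{2-o}$ and the cross term $8\alpha^2k^{5-2o}$ into $2\alpha k^{3-o}$ using $k\geq 2$, $\alpha=0.1$ and $o=7$; your check that $G_{x,y,z}$ and $w_{x,y}$ are nonzero when $n\geq 2k$ is a detail the paper omits.
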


\begin{lemma}\label{lm:ratio_est}
Let $0\leq x<k$ and $0\leq y\leq k$. Then
\[
\frac{1}{k}
\leq
\left|
\frac{w_{x+1,y}}{w_{x,y}}
\right|
\leq
\frac{k^2}{2}.
\]
\end{lemma}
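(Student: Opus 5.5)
The plan is a direct computation of the ratio $w_{x+1,y}/w_{x,y}$ from the definition of $w$. Because the formula for $w_{x,y}$ changes when $x$ crosses $y$, I will split into three cases: $x+1\le y$, $x=y$, and $x>y$. Throughout I will use $0\le x<k$ and $0\le y\le k$.

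\emph{Case $x+1\le y$.} Both $w_{x,y}$ and $w_{x+1,y}$ are given by the first branch, and the factor $1/(k-y)!$ cancels. This gives
\[
\frac{w_{x+1,y}}{w_{x,y}}=\frac{\binom{k-x-1}{y-x-1}}{\binom{k-x}{y-x}}=\frac{y-x}{k-x}.
\]
Since $1\le y-x\le k-x\le k$, this ratio lies in $[1/k,1]$, which is inside $[1/k,k^2/2]$ once $k\ge 2$. Note that $k=2t\ge 2$.

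\emph{Case $x=y$.} Here $w_{x,x}=\binom{k-x}{0}/(k-x)!=1/(k-x)!$, while $w_{x+1,x}=-\binom{x+1}{x}/(k-x-1)!$. Their ratio has absolute value $(x+1)(k-x)$. This is at least $1\cdot 1\ge 1/k$. Its maximum over real $x$ is $(k+1)^2/4$, which is at most $k^2/2$ for $k\ge 3$; since $k$ is even, the remaining check is $k=2$, where $\max_{x\in\{0,1\}}(x+1)(2-x)=2=k^2/2$.

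\emph{Case $x>y$.} Both values come from the second branch. The signs differ by a factor $-1$, so
\[
\left|\frac{w_{x+1,y}}{w_{x,y}}\right|=\frac{\binom{x+1}{y}}{\binom{x}{y}}\cdot\frac{(k-x)!}{(k-x-1)!}=\frac{x+1}{x+1-y}\,(k-x).
\]
For the lower bound, $(x+1)/(x+1-y)\ge 1$ and $k-x\ge1$, so the ratio is at least $1\ge 1/k$. For the upper bound, $x+1-y\ge 2$ because $x>y$, so the ratio is at most $(x+1)(k-x)/2\le (k+1)^2/8\le k^2/2$.

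The only step needing care is the diagonal case $x=y$, where the two different branches of $w$ meet. There the estimate $(x+1)(k-x)\le k^2/2$ is tight at $k=2$, and it relies on $k$ being even (the value $k=1$ would fail). Everything else is a routine cancellation of binomials and factorials.
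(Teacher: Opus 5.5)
Your proof is correct and is the same three-case computation as in the paper ($x<y$, $x=y$, $x>y$), producing the same ratios $\frac{y-x}{k-x}$, $(x+1)(k-x)$ and $\frac{(x+1)(k-x)}{x+1-y}$. Your version is actually a bit more careful: the paper bounds the last two cases by $\frac{(k+1)^2}{4}<\frac{k^2}{2}$, which fails at $k=2$, and your separate check of $k=2$ in the diagonal case, together with your use of $x+1-y\ge 2$ when $x>y$, closes that small gap.
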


\begin{proof}
We consider the three cases $x<y$, $x=y$, and $x>y$.
\begin{itemize}
    \item Suppose that $x<y$. Then
    \[
    w_{x,y}
    =
    \binom{k-x}{y-x}\frac{1}{(k-y)!},
    \qquad
    w_{x+1,y}
    =
    \binom{k-x-1}{y-x-1}\frac{1}{(k-y)!}.
    \]
    Hence
    \[
    \left|
    \frac{w_{x+1,y}}{w_{x,y}}
    \right|
    =
    \frac{y-x}{k-x},
    \]
    which is clearly at least $\frac{1}{k}$ and at most $1$.

    \item Suppose that $x=y$. Then
    \[
    w_{x,y}=w_{y,y}=\frac{1}{(k-y)!},
    \qquad
    w_{x+1,y}=w_{y+1,y}
    =
    -\frac{y+1}{(k-y-1)!}.
    \]
    Hence
    \[
    \left|
    \frac{w_{x+1,y}}{w_{x,y}}
    \right|
    =
    (k-y)(y+1),
    \]
    which is clearly at least $1$ and at most
    \[
    \frac{(k+1)^2}{4}<\frac{k^2}{2}.
    \]

    \item Suppose that $x>y$. Then
    \[
    w_{x,y}
    =
    (-1)^{x-y}\binom{x}{y}\frac{1}{(k-x)!},
    \qquad
    w_{x+1,y}
    =
    (-1)^{x-y+1}\binom{x+1}{y}\frac{1}{(k-x-1)!}.
    \]
    Hence
    \[
    \left|
    \frac{w_{x+1,y}}{w_{x,y}}
    \right|
    =
    \frac{(x+1)(k-x)}{x-y+1},
    \]
    which is also clearly at least $1$ and at most
    \[
    \frac{(k+1)^2}{4}<\frac{k^2}{2}.
    \]
\end{itemize}
\end{proof}

\begin{lemma}\label{lm:matr_det}
Let $A=(a_{i,j})_{i,j=1}^{N}$ be a matrix such that
\[
a_{i,j}=b_{i,j}X^{r_{i,j}}.
\]
Suppose that positive numbers $K,L$ and a permutation
$\pi\in S_N$ satisfy the following conditions:
\begin{itemize}
    \item $X>X_0:=4KLN$;

    \item for all $1\leq i\leq N-1$ and $1\leq j\leq N$ we have
    \[
    K^{-1} \leq \left| \frac{b_{i+1,j}}{b_{i,j}} \right| \leq L;
    \]

    \item for all $1\leq i,j\leq N$ we have
    \[
    r_{\pi(j),j}\geq r_{i,j};
    \]

    \item for every $j$ and every $i>\pi(j)$ we have
    \[
    r_{\pi(j),j}-r_{i,j} \geq i-\pi(j).
    \]
\end{itemize}
Then
\[
\frac{\det A}
     {\sign(\pi)\prod_{j=1}^{N}a_{\pi(j),j}}
\in
\left(
1-2X^{-1}KLN,
1+2X^{-1}KLN
\right).
\]
\end{lemma}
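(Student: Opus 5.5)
The plan is to expand $\det A$ by the Leibniz formula and show that the single term indexed by $\pi$ dominates all the others. Write
\[
\det A=\sum_{\sigma\in S_N}\sign(\sigma)\prod_{j=1}^N a_{\sigma(j),j},
\qquad
R_\sigma:=\frac{\sign(\sigma)\prod_j a_{\sigma(j),j}}{\sign(\pi)\prod_j a_{\pi(j),j}} .
\]
Then $\det A/(\sign(\pi)\prod_j a_{\pi(j),j})=1+\sum_{\sigma\neq\pi}R_\sigma$. It therefore suffices to prove $\sum_{\sigma\neq\pi}|R_\sigma|<2X^{-1}KLN$.

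\textbf{Step 1: bounding a single term.} For each $j$ put $d_j:=\sigma(j)-\pi(j)$. Since $\sigma$ and $\pi$ are both bijections onto $[N]$, we have $\sum_j d_j=0$. Define
\[
D(\sigma):=\sum_{j:\,d_j>0}d_j=\sum_{j:\,d_j<0}|d_j| .
\]
\begin{itemize}
\item \emph{Exponents.} If $d_j>0$, the fourth hypothesis gives $r_{\pi(j),j}-r_{\sigma(j),j}\ge d_j$. If $d_j\le 0$, the third hypothesis gives $r_{\pi(j),j}-r_{\sigma(j),j}\ge 0$. Hence the total power of $X$ in $|R_\sigma|$ is at most $X^{-D(\sigma)}$.
\item \emph{Coefficients.} Telescope the ratio $b_{\sigma(j),j}/b_{\pi(j),j}$ along the column $j$ using the second hypothesis. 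Moving down $d_j>0$ rows costs at most $L^{d_j}$, and moving up $|d_j|$ rows costs at most $K^{|d_j|}$. The product over all $j$ is therefore at most $L^{D}K^{D}$.
\end{itemize}
Together these give $|R_\sigma|\le (KL/X)^{D(\sigma)}$. Moreover $D(\sigma)\ge1$ whenever $\sigma\ne\pi$.

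\textbf{Step 2: counting permutations with a given $D$.} Set $\rho:=\sigma\pi^{-1}$. Then $d_j=\rho(i)-i$ with $i=\pi(j)$, so $D(\sigma)$ is the total upward displacement $\sum_i(\rho(i)-i)^+$ of $\rho$, and $\sigma\mapsto\rho$ is a bijection. I will show that the number of $\rho$ with total upward displacement exactly $D$ is at most $N^{D}$. The idea is an encoding of $\rho$ by at most $D$ choices from $[N]$.
\begin{itemize}
\item Decompose $\rho$ into its nontrivial cycles.
\item A cycle $c$ with upward displacement $D_c$ has length at most $D_c+1$. Indeed, going once around the cycle the upward and downward displacements cancel, and each nonfixed point contributes at least $1$ to the total displacement $2D_c$.
\item Encode the cycle by its smallest element together with the successive upward jump sizes. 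Equivalently, read it as a lattice walk of total upward length $D_c$ and count such walks.
\end{itemize}
This should give $\prod_c N^{D_c}=N^{D}$, perhaps up to an absorbed constant such as $(2N)^{D}$ or $(N^{1/2}\cdot 2)^{D}$.

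\textbf{Step 3: summing.} With the count $N^D$,
\[
\sum_{\sigma\ne\pi}|R_\sigma|\le\sum_{D\ge1}\Bigl(\frac{KLN}{X}\Bigr)^D=\frac{KLN/X}{1-KLN/X}\le \tfrac43\,X^{-1}KLN<2X^{-1}KLN,
\]
because $X>4KLN$. This yields the claimed interval.

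The main obstacle is Step 2, obtaining a count sharp enough to sum to a constant times $KLN/X$. If the clean bound $N^D$ resists, I would first try the cruder bound "at most $(cN)^D$" via the cycle-walk encoding. I would then check whether the margin $X>4KLN$ still gives a total below $2X^{-1}KLN$. If it does not, I would instead prove the lemma by induction on $N$. That route expands along the column $j_0$ with $\pi(j_0)=N$, or along the last row, and uses the same Step 1 estimates on the minors. In this induction the $i>\pi(j)$ decay condition handles the rows below $\pi(j)$, and the hypothesis $r_{\pi(j),j}\ge r_{i,j}$ handles the rows above.
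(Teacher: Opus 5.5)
\textbf{Verdict: there is a gap.} Your Steps 1 and 3 match the paper's argument, but Step 2 is not established.

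Your $D(\sigma)$ is the paper's depth $\ddp(\sigma\circ\pi^{-1})$. The per-term bound $(KL/X)^{D(\sigma)}$ is derived as in the paper, and the proof ends with the same geometric series. The gap is Step 2, the only nontrivial ingredient, which you leave as a hope.

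\textbf{How sharp the count must be.} Write $y:=KLN/X<1/4$. A count of the form $(cN)^D$ gives $\sum_D (cy)^D=cy/(1-cy)$. This stays below $2y$ for all admissible $y$ only if $c\le 4/3$. Your fallback $(2N)^D$ therefore yields $2y/(1-2y)$, which tends to $4y$ as $y\to 1/4$, so it misses the stated interval. The alternative $(2\sqrt N)^D$ is false once $N\ge 6$: for $D=1$ there are $N-1>2\sqrt N$ permutations of depth $1$, namely the adjacent transpositions.

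\textbf{The encoding is not yet an argument.} The minimum of a cycle together with its upward jump sizes does not determine the cycle, since the downward jumps and their interleaving are not recorded. Counting the closed walks naively adds a factor exponential in $D_c$ per cycle, on top of the $N$ choices of the minimum. Nothing in your sketch compensates for that factor. (Also, ``each nonfixed point contributes at least $1$'' only gives $\ell_c\le 2D_c$. The bound $\ell_c\le D_c+1$ does hold, because the upward steps must climb from the cycle's minimum to its maximum.)

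\textbf{The induction fallback does not avoid the issue.} In the column $j_0$ with $\pi(j_0)=N$ the fourth hypothesis is vacuous. So the entries $a_{i,j_0}$ with $i<N$ have no $X$-decay relative to $a_{N,j_0}$. Deleting an interior row $i$ also breaks the second hypothesis with the same $K,L$, since rows $i-1$ and $i+1$ become adjacent. All the decay would then have to be extracted from the minors by a permutation count anyway.

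\textbf{The missing idea} is the paper's counting bound
\[
\bigl|\{\rho\in S_N:\ddp(\rho)=\ell\}\bigr|\le 2^{\ell-1}\binom{N+\ell-2}{\ell}\le (N-1)N^{\ell-1}<N^{\ell}.
\]
It rests on a factorization, essentially due to Petersen--Tenner, of every $\rho$ as $t_{a_1,b_1}\cdots t_{a_r,b_r}$ with three properties: the upper endpoints $b_j$ are pairwise distinct, they are ordered first decreasingly and then increasingly, and $\ddp(\rho)=\sum_j(b_j-a_j)$.

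The factorization is built by induction on $N$. Put $p=\rho^{-1}(N)$ and $q=\rho(N)$. Split off $t_{p,N}$ on the right if $p\ge q$, and $t_{q,N}$ on the left if $q>p$. Either way the depth drops by exactly $N-\max(p,q)$, which is the length of the transposition split off.

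The factorization is then determined by four pieces of data:
\begin{itemize}
\item the number $r\le\ell$ of transpositions;
\item an $r$-subset of $\{2,\dots,N\}$ for the $b_j$;
\item one of at most $2^{r-1}$ unimodal orders of that subset;
\item a composition of $\ell$ into the $r$ lengths $b_j-a_j$.
\end{itemize}
Bound $2^{r-1}\le 2^{\ell-1}$ and apply Vandermonde's identity to $\sum_r\binom{N-1}{r}\binom{\ell-1}{r-1}$ to get the displayed bound. With this count as your Step 2, your Steps 1 and 3 complete the proof.
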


\begin{proof}
For a permutation $\rho\in S_N$, define its depth
\[
\ddp(\rho)
:=
\sum_{j:\, j <\rho(j)}
\bigl(\rho(j)-j\bigr),
\]
and put
\[
R_\ell( \pi) := \{\rho\in S_N:\ddp(\rho \circ \pi^{-1})=\ell\}.
\]
Obviously, there exists a natural bijection between $R_\ell(\pi)$ and $R_{\ell}(id)$.

Fix $\rho\in R_\ell(\pi)$. We estimate
\[
\left|
\prod_{j=1}^{N}a_{\rho(j),j}
\right|
\]
in terms of
\[
\left|
\prod_{j=1}^{N}a_{\pi(j),j}
\right|.
\]
If $\rho(j)>\pi(j)$, then
\[
\left|
\frac{b_{\rho(j),j}}{b_{\pi(j),j}}
\right|
=
\prod_{i=\pi(j)}^{\rho(j)-1}
\left|
\frac{b_{i+1,j}}{b_{i,j}}
\right|
\leq
L^{\rho(j)-\pi(j)}.
\]
If $\rho(j)<\pi(j)$, then
\[
\left|
\frac{b_{\rho(j),j}}{b_{\pi(j),j}}
\right|
=
\prod_{i=\rho(j)}^{\pi(j)-1}
\left|
\frac{b_{i,j}}{b_{i+1,j}}
\right|
\leq
K^{\pi(j)-\rho(j)}.
\]
Furthermore,
\[
\left|
\frac{\prod_{j=1}^{N}a_{\rho(j),j}}
     {\prod_{j=1}^{N}a_{\pi(j),j}}
\right|
=
\prod_{j=1}^{N}
\left|
\frac{b_{\rho(j),j}}{b_{\pi(j),j}}
\right|
X^{\sum_{j=1}^{N}(r_{\rho(j),j}-r_{\pi(j),j})} 
\leq
\prod_{j:\,\rho(j)>\pi(j)}
L^{\rho(j)-\pi(j)}
\prod_{j:\,\rho(j)<\pi(j)}
K^{\pi(j)-\rho(j)}
X^{-m} = K^mL^mX^{-m}.
\]
because 
\[
\sum_{j=1}^{N}(r_{\rho(j),j}-r_{\pi(j),j}) \leq \sum_{j: \pi(j)< \rho(j)}(\pi(j) - \rho(j)) = \sum_{i:i < \rho(\pi^{-1}(i))}(i - \rho(\pi^{-1}(i))) = - \ddp(\rho \circ \pi^{-1}) = -m.
\]

We now estimate $|R_\ell(\pi)| = |R_{\ell}(id)|$.

\begin{lemma}
For every $\ell\geq1$, one has
\[
|R_\ell (id)|
\leq
2^{\ell -1}\binom{N+\ell -2}{\ell }.
\]
\end{lemma}

\begin{proof}
The following argument is essentially contained in~\cite{petersen2015depth}.
We identify a permutation matrix with the corresponding permutation
$\rho\in S_N$. 

We first construct a convenient transposition factorization of
$\rho$.  Write $t_{a,b}$ for the transposition interchanging $a$
and $b$, where $a<b$.  We claim that every $\rho\in S_N$ admits a
factorization
\begin{equation}
    \rho
    =
    t_{a_1,b_1}\cdots t_{a_r,b_r}
    \label{eq:shallow-factorization}
\end{equation}
with the following properties:
\begin{enumerate}
    \item the upper endpoints $b_1,\ldots,b_r$ are pairwise distinct;
    \item the sequence $b_1,\ldots,b_r$ is first strictly decreasing
          and then strictly increasing;
    \item
    \[
        \ddp(\rho)
        =
        \sum_{j=1}^{r}(b_j-a_j).
    \]
\end{enumerate}

We prove this claim by induction on $N$.  If $\rho(N)=N$, we simply
apply the induction hypothesis to the restriction of $\rho$ to
$[N-1]$.

Suppose now that $\rho(N)\neq N$, and put
$p:=\rho^{-1}(N)$, $q:=\rho(N)$.
Thus $p,q<N$.  Define
\[
    \sigma := \rho\,t_{p,N} = t_{q,N}\rho.
\]
Then $\sigma(N)=N$, and the only changed values are
$\rho(p)=N$, $\rho(N)=q$, which are replaced by $\sigma(p)=q$, $\sigma(N)=N$.
Consequently,
\begin{equation} \label{eq:depth-reduction}
    \ddp(\rho)-\ddp(\sigma) =
    (N-p)-\max\{q-p,0\} = N-\max\{p,q\}.
\end{equation}

If $p\geq q$, then $\rho=\sigma t_{p,N}$, and the new transposition has contribution $N-p$.  If $q>p$, then $\rho=t_{q,N}\sigma$,
and the new transposition has contribution $N-q$.  In either case,
the contribution is exactly the quantity in~\eqref{eq:depth-reduction}.

The new upper endpoint is $N$, while all upper endpoints in the
factorization of $\sigma$ are at most $N-1$.  In the first case it
is appended to the factorization, and in the second case it is
prepended.  Hence the upper endpoints remain pairwise distinct and
form a sequence which is first decreasing and then increasing.
This proves the claim.

We now count the possible factorizations
\eqref{eq:shallow-factorization} for permutations in $R_\ell $.  Suppose
that the factorization contains $r$ transpositions.  Since every
$b_j-a_j$ is positive, $1\leq r\leq \ell $,
and the distinct upper endpoints form an $r$-element subset of
$\{2,\ldots,N\}$.  Hence they can be chosen in $\binom{N-1}{r}$
ways.

For a fixed set of upper endpoints, there are at most $2^{r-1}$ orders which are first decreasing and then increasing.  Indeed, the
smallest upper endpoint is the unique minimum of the sequence, and
each of the other $r-1$ endpoints may be placed either to its left
or to its right; the order on each side is then forced.

Finally, put $\ell_j:=b_j-a_j$.
The numbers $\ell_1,\ldots,\ell_r$ are positive integers satisfying
\[
    \ell_1+\cdots+\ell_r=\ell .
\]
The number of such compositions is $\binom{\ell -1}{r-1}$.

Once $b_j$ and $\ell_j$ are fixed, the lower endpoint is determined
by $a_j=b_j-\ell_j$.
Some choices may fail to satisfy $a_j \geq 1$, but ignoring this
restriction only enlarges the count.  Since an ordered product of
transpositions determines the permutation, we obtain
\[
    |R_\ell (id)|
    \leq
    \sum_{r=1}^{\min\{\ell ,N-1\}}
    2^{r-1}
    \binom{N-1}{r}
    \binom{\ell -1}{r-1}.
\]

Using $2^{r-1}\leq2^{\ell -1}$ and Vandermonde's identity, we conclude
that
\[
    |R_\ell (id)| \leq
    2^{\ell -1}
    \sum_{r\geq1}
    \binom{N-1}{r}
    \binom{\ell -1}{r-1} =
    2^{\ell -1}
    \sum_{r\geq1}
    \binom{N-1}{r}
    \binom{\ell -1}{\ell -r} =
    2^{\ell -1}\binom{N+\ell -2}{\ell }.
\]
\end{proof}

Observe that
\[
\begin{aligned}
|R_\ell (\pi)|= |R_{\ell }(id)| \leq 2^{\ell -1}\binom {N+\ell -2}{\ell }
&=
\frac{2^{\ell -1}\prod_{i=0}^{\ell -1}(N+\ell -2-i)}{\ell !} \\
&=
(N-1)
\prod_{i=1}^{\ell -1}
\left(
\frac{2(N-1+i)}{i+1}
\right) \\
&\leq
(N-1)N^{\ell -1}
<
N^\ell .
\end{aligned}
\]

We can now estimate the determinant expansion:
\[
\left|
\frac{\det A}
     {\sign(\pi)\prod_{j=1}^{N}a_{\pi(j),j}}
-1
\right|
=
\left|
\frac{
\sum_{\rho\in S_N}
\sign(\rho)
\prod_{j=1}^{N}a_{\rho(j),j}
}
{
\sign(\pi)
\prod_{j=1}^{N}a_{\pi(j),j}
}
-1
\right| \leq
\sum_{\ell =1}^{\infty}
\sum_{\rho\in R_\ell (\pi)}
\left|
\frac{\prod_{j=1}^{N}a_{\rho(j),j}}
     {\prod_{j=1}^{N}a_{\pi(j),j}}
\right| \leq
\]
\[
\sum_{\ell =1}^{\infty}
|R_\ell (\pi)|K^\ell L^\ell X^{-\ell } \leq
\sum_{\ell =1}^{\infty}
(X^{-1}KLN)^\ell  =
\frac{X^{-1}KLN}{1-X^{-1}KLN} \leq
2X^{-1}KLN.
\]
\end{proof}

\begin{corollary}\label{col:full_rk}
If the assumptions of Lemma~\ref{lm:matr_det} are satisfied, then
$A$ is nonsingular. Indeed, its determinant is nonzero, since
\[
0
\notin
\left(
1-2X^{-1}KLN,
1+2X^{-1}KLN
\right).
\]
\end{corollary}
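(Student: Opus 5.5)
The plan is to read off nonsingularity directly from the conclusion of Lemma~\ref{lm:matr_det}. Two things need checking: that the interval $\left(1-2X^{-1}KLN,\,1+2X^{-1}KLN\right)$ really excludes $0$, and that the normalizing quantity $\sign(\pi)\prod_{j=1}^{N}a_{\pi(j),j}$ is a nonzero real number. Only then does the ratio $\det A/\bigl(\sign(\pi)\prod_j a_{\pi(j),j}\bigr)$ make sense and force $\det A\neq 0$.

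First, the interval. The hypothesis $X>X_0=4KLN$ with $K,L,N>0$ gives $X^{-1}KLN<\tfrac14$. Hence $2X^{-1}KLN<\tfrac12$, so the interval lies inside $(\tfrac12,\tfrac32)$ and in particular does not contain $0$.

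Second, the denominator. The second hypothesis of Lemma~\ref{lm:matr_det} bounds $|b_{i+1,j}/b_{i,j}|$ for all $i\le N-1$ and all $j$. This implicitly requires every $b_{i,j}$ with $i\le N-1$ to be nonzero, since the ratio must be defined. The lower bound $|b_{i+1,j}/b_{i,j}|\ge K^{-1}>0$ then also gives $b_{N,j}\ne0$ for every $j$. So all $b_{i,j}$ are nonzero. Since $X>0$, every $a_{i,j}=b_{i,j}X^{r_{i,j}}$ is nonzero, and therefore $\sign(\pi)\prod_j a_{\pi(j),j}\neq0$.

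Finally, write $\det A=\sign(\pi)\prod_j a_{\pi(j),j}\cdot c$. Here $c$ is the ratio from Lemma~\ref{lm:matr_det}, and by the first step $c\in(\tfrac12,\tfrac32)$. A product of two nonzero reals is nonzero, so $A$ is nonsingular. As a bonus, the sign of $\det A$ equals $\sign(\pi)\prod_j\sign(a_{\pi(j),j})$, which is presumably what will be used later for the sign pattern in Lemma~\ref{lm:key}.

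There is no real obstacle here. The only subtle point is making explicit the implicit nonvanishing of the $b_{i,j}$, which the ratio hypothesis guarantees.
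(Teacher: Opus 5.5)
Your proposal is correct and follows the paper's own one-line argument: Lemma~\ref{lm:matr_det} places $\det A/\bigl(\sign(\pi)\prod_{j}a_{\pi(j),j}\bigr)$ in an interval that excludes $0$, because $X>4KLN$ forces $2X^{-1}KLN<\tfrac12$. Your extra check that the normalizing product is nonzero is a harmless clarification the paper leaves implicit; it is in any case presupposed by the lemma's conclusion, which is needed when $N=1$, where the ratio hypothesis is vacuous and your derivation of $b_{i,j}\neq 0$ gives nothing.
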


\subsection{Proof of the key lemma} 

Define the matrix
\[
D=(d_{u,v})_{u,v=1}^{t},
\qquad
d_{u,v}=P_{2u}(2v-1).
\]
The equal-eigenvalue equations are
\[
\sum_{v=1}^{t}d_{u,v}a_{2v-1}=-1,
\qquad
1\leq u\leq t.
\]
By Cramer's rule,
\[
a_{2v-1}
=
\frac{\det D_v}{\det D},
\]
where $D_v$ is obtained from $D$ by replacing its $v$th column with
the column vector whose entries are all equal to $-1$.

By Corollary~\ref{col:est},
\[
\left|
\frac{d_{u,v}}
     {w_{2u,2v-1}n^{k-\max(2u,2v-1)}}-1
\right|
\leq
6\alpha k^{3-o}.
\]
Set
\[
\Delta_{u,v}
:=
\frac{d_{u,v}}
     {w_{2u,2v-1}n^{k-\max(2u,2v-1)}}-1.
\]
Also put
\[
e_{u,v}
:=
d_{u,v}n^{\max(2u,2v-1)-k}.
\]
Then
\[
\frac{e_{u,v}}{w_{2u,2v-1}}
=
1+\Delta_{u,v}.
\]
Define
\[
K_0
:=
\frac{k^2(1+6\alpha k^{3-o})}{1-6\alpha k^{3-o}},
\qquad
L_0
:=
\frac{(1+6\alpha k^{3-o})k^4}
     {4(1-6\alpha k^{3-o})}.
\]

\begin{proposition}\label{pr:est1}
Let $1\leq u\leq t-1$ and $1\leq v\leq t$. Then
\[
K_0^{-1}
=
\frac{1-6\alpha k^{3-o}}{k^2(1+6\alpha k^{3-o})}
\leq
\left|
\frac{e_{u+1,v}}{e_{u,v}}
\right|
\leq
\frac{k^4(1+6\alpha k^{3-o})}
     {4(1-6\alpha k^{3-o})}
=
L_0.
\]
\end{proposition}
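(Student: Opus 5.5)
We factor the ratio into a ``main term'' ratio of the $w$'s and an ``error'' ratio controlled by Corollary~\ref{col:est}. By definition $e_{u,v}=w_{2u,2v-1}(1+\Delta_{u,v})$, so
\[
\frac{e_{u+1,v}}{e_{u,v}}
=
\frac{w_{2u+2,2v-1}}{w_{2u,2v-1}}\cdot\frac{1+\Delta_{u+1,v}}{1+\Delta_{u,v}}.
\]
Corollary~\ref{col:est} gives $|\Delta_{u,v}|,|\Delta_{u+1,v}|\le 6\alpha k^{3-o}$. Since $6\alpha k^{3-o}<1$ for $k\ge 2$, the second factor lies between $\frac{1-6\alpha k^{3-o}}{1+6\alpha k^{3-o}}$ and $\frac{1+6\alpha k^{3-o}}{1-6\alpha k^{3-o}}$. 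This is exactly where the perturbation factors in $K_0$ and $L_0$ come from.

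It remains to bound the main-term ratio, which is a product of two one-step ratios:
\[
\frac{w_{2u+2,2v-1}}{w_{2u,2v-1}}
=
\frac{w_{2u+2,2v-1}}{w_{2u+1,2v-1}}\cdot\frac{w_{2u+1,2v-1}}{w_{2u,2v-1}}.
\]
We apply Lemma~\ref{lm:ratio_est} twice, with $x=2u$ and then $x=2u+1$, both of which are $<k$ since $u\le t-1$. Each one-step ratio lies in $[1/k,\,k^2/2]$ in absolute value, so
\[
\frac{1}{k^2}\le\left|\frac{w_{2u+2,2v-1}}{w_{2u,2v-1}}\right|\le\frac{k^4}{4}.
\]
Multiplying by the bounds on the perturbation factor yields exactly $K_0^{-1}$ and $L_0$.

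We also need all the $w$'s involved to be nonzero, so that the ratios make sense. This follows from the explicit formula: for $x\le y$ the binomial $\binom{k-x}{y-x}$ is positive, and for $x>y$ the binomial $\binom{x}{y}$ is positive. The $e$'s are then nonzero because $1+\Delta\ne0$.

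The only point needing care is the index bookkeeping, namely checking that both applications of Lemma~\ref{lm:ratio_est} are in range. I expect no real obstacle.
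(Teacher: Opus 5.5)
Your proposal is correct and follows the paper's proof. The paper likewise factors $|e_{u+1,v}/e_{u,v}|$ as $|e_{u+1,v}/w_{2u+2,2v-1}|\cdot|w_{2u+2,2v-1}/w_{2u,2v-1}|\cdot|w_{2u,2v-1}/e_{u,v}|$, bounding the outer factors by Corollary~\ref{col:est} and the middle one by Lemma~\ref{lm:ratio_est} applied twice; your explicit range check ($2u+1\le k-1$) and the nonvanishing of the $w$'s are details the paper leaves implicit.
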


\begin{proof}
We have
\[
\left|
\frac{e_{u+1,v}}{e_{u,v}}
\right|
=
\left|
\frac{e_{u+1,v}}{w_{2u+2,2v-1}}
\right|
\left|
\frac{w_{2u+2,2v-1}}{w_{2u,2v-1}}
\right|
\left|
\frac{w_{2u,2v-1}}{e_{u,v}}
\right|.
\]
Consequently,
\[
\frac{1-6\alpha k^{3-o}}{k^2(1+6\alpha k^{3-o})}
\leq
\left|
\frac{e_{u+1,v}}{w_{2u+2,2v-1}}
\cdot
\frac{w_{2u+2,2v-1}}{w_{2u,2v-1}}
\cdot
\frac{w_{2u,2v-1}}{e_{u,v}}
\right|
\leq
\frac{k^4(1+6\alpha k^{3-o})}
     {4(1-6\alpha k^{3-o})}.
\]
\end{proof}

It is also immediate that
\begin{equation}\label{ineq:n_big}
4K_0L_0t = 4t\frac{k^6(1+6\alpha k^{3-o})^2}
        {4(1-6\alpha k^{3-o})^2}
< k^7 < \alpha ^{-1}k^o \leq n.
\end{equation}

We first analyze $D$. Consider the matrix
\[
H=(h_{u,v})_{u,v=1}^{t},
\qquad
h_{u,v}
=
d_{u,v}n^u
=
e_{u,v}n^{k+u-\max(2u,2v-1)}.
\]
Clearly,
\[
\det H
=
\det D\cdot n^{t(t+1)/2}.
\]
We verify that $H$ satisfies the assumptions of
Lemma~\ref{lm:matr_det} with
\[
K=K_0,
\qquad
L=L_0,
\qquad
\pi=\operatorname{id}.
\]
The first two assumptions follow from~\eqref{ineq:n_big} and
Proposition~\ref{pr:est1}, respectively. It remains to verify that
\[
r_{u,v}
:=
k+u-\max(2v-1,2u)
\]
satisfies the exponent conditions of Lemma~\ref{lm:matr_det}.

Indeed,
\[
r_{v,v}=k-v.
\]
If $u<v$, then
\[
r_{u,v}
\leq
k+u-2v+1
\leq
k+(v-1)-2v+1
=
k-v.
\]
If $u\geq v$, then
\[
r_{u,v}
\leq
k+u-2u
=
k-u
\leq
k-v.
\]
This proves the first exponent condition.

We also have to prove that, whenever $u>v$,
\[
r_{v,v}-r_{u,v}\geq u-v.
\]
For $u>v$,
\[
r_{u,v}=k+u-2u=k-u,
\]
and therefore
\[
r_{v,v}-r_{u,v}
=
(k-v)-(k-u)
=
u-v.
\]
Thus the second exponent condition is also satisfied.

\begin{figure}[h]
    \centering
    \begin{tikzpicture}[scale=1]
    \def\eps{0.08}
    % Grid
    \draw[thick] (0,0) grid (5,5);

    \foreach \n in {1,7,13,19,25}{
    \pgfmathtruncatemacro{\r}{(\n-1)/5}
    \pgfmathtruncatemacro{\c}{mod(\n-1,5)}
    \fill[brown!60]
        (\c+\eps,4-\r+\eps)
        rectangle
        (\c+1-\eps,5-\r-\eps);
        }
        
    % Numbers
    \foreach \row in {0,...,4}{
        \foreach \col in {0,...,4}{
            \pgfmathtruncatemacro{\num}{9+\row- max(2*\row, 2*\col-1)}
            \node at (\col+0.5,4.5-\row) {\num};
        }
    }
    \end{tikzpicture}
    \caption{$r_{u,v}=k+u-\max(2v-1,2u)$ for $t=5$.}
    \label{fig:table1}
\end{figure}

Applying Lemma~\ref{lm:matr_det}, we obtain
\[
\frac{\det H}{\prod_{v=1}^{t}h_{v,v}}
\in
\left(
1-2n^{-1}K_0L_0t,
1+2n^{-1}K_0L_0t
\right)
\subset
\left(
1-\alpha k^{7-o},
1+\alpha k^{7-o}
\right).
\]
On the other hand,
\[
\frac{\det H}{\prod_{v=1}^{t}h_{v,v}}
=
\frac{
\det D\cdot n^{t(t+1)/2}
}{
\left(\prod_{v=1}^{t}e_{v,v}\right)
 n^{\sum_{v=1}^{t}(k+v-2v)}
}.
\]
It follows that
\[
\frac{\det D}
     {\left(\prod_{v=1}^{t}e_{v,v}\right)n^{kt-t(t+1)}}
=
\frac{\det D}
     {\left(\prod_{v=1}^{t}e_{v,v}\right)n^{t(t-1)}}
=
1+\varepsilon,
\]
where
\[
|\varepsilon|<\alpha k^{7-o}.
\]
In particular, $D$ is nonsingular. Hence the system
\[
\lambda_{2r}(a_1,a_3,\ldots,a_{2t-1})=-1,
\qquad
1\leq r\leq t,
\]
has a unique solution.

We now analyze $D_s$. Consider the matrix
\[
B_s=(b_{s,u,v})_{u,v=1}^{t},
\qquad
b_{s,u,v}
=
(D_s)_{u,v}n^{\min(u,s)}
=
c_{s,u,v}n^{r_{s,u,v}},
\]
where
\[
c_{s,u,v}
=
\begin{cases}
    e_{u,v}, & v\neq s,\\
    -1, & v=s,
\end{cases}
\qquad
r_{s,u,v}
=
\begin{cases}
    k+\min(u,s)-\max(2u,2v-1), & v\neq s,\\
    \min(u,s), & v=s.
\end{cases}
\]
Clearly,
\[
\det B_s
=
\det D_s\cdot n^{\sum_{u=1}^{t}\min(u,s)}.
\]
We show that $B_s$ satisfies the assumptions of
Lemma~\ref{lm:matr_det} with
\[
K=K_0,
\qquad
L=L_0,
\qquad
\pi_s(v)
=
\begin{cases}
    v, & v<s,\\
    t, & v=s,\\
    v-1, & v>s.
\end{cases}
\]
The first assumption follows from~\eqref{ineq:n_big}. For $v\neq s$,
the second assumption follows from Proposition~\ref{pr:est1}; for
$v=s$, we have
\[
K^{-1}
<
1
=
\left|
\frac{c_{s,u+1,s}}{c_{s,u,s}}
\right|
<
L.
\]
It remains to check the exponent conditions for $r_{s,u,v}$. We
consider separately the columns with $v<s$, $v=s$, and $v>s$.

Suppose first that $v<s$. We must verify that
\[
r_{s,v,v}\geq r_{s,u,v}.
\]
We have
\[
r_{s,v,v}
=
k+\min(v,s)-\max(2v,2v-1)
=
k-v.
\]
If $u<v$, then
\[
\begin{aligned}
r_{s,u,v}
&=
k+\min(u,s)-\max(2u,2v-1) \\
&=
k+u-(2v-1) \\
&\leq
k+v-1-2v+1
=
k-v.
\end{aligned}
\]
If $u\geq v$, then
\[
r_{s,u,v}
\leq
k+u-2u
=
k-u
\leq
k-v.
\]
This proves the first condition. If
$u>\pi_s(v)=v$, then
\[
r_{s,v,v}-r_{s,u,v}
\geq
(k-v)-(k-u)
=
u-v,
\]
which proves the second condition.

Suppose next that $v=s$. The inequality
\[
r_{s,t,s}
=
\min(t,s)
=
s
\geq
\min(u,s)
=
r_{s,u,s}
\]
is immediate. Moreover, the inequality
$u>\pi_s(s)=t$ never occurs, so the second condition is vacuous.

Finally, suppose that $v>s$. The first condition is
\[
r_{s,v-1,v}\geq r_{s,u,v}.
\]
We have
\[
r_{s,v-1,v}
=
k+\min(s,v-1)-\max(2(v-1),2v-1)
=
k+s-(2v-1).
\]
For every $u$,
\[
r_{s,u,v} = k+\min(u,s)-\max(2u,2v-1) \leq k+s-(2v-1) = r_{s,v-1,v}.
\]
Thus the first condition is satisfied. For the second condition, let
$u>\pi_s(v)=v-1$. Since then $u\geq v$, we have
\[
r_{s,u,v}\leq k+s-2u,
\]
and hence
\[
r_{s,v-1,v}-r_{s,u,v} \geq 2u-(2v-1) \geq u-(v-1).
\]
Thus the second condition is satisfied in this case as well.

\begin{figure}[h]
    \centering
    \begin{tikzpicture}[scale=1]
    \def\eps{0.08}
    % Grid
    \draw[thick] (0,0) grid (5,5);

    \foreach \n in {1,7,14,20,23}{
    \pgfmathtruncatemacro{\r}{(\n-1)/5}
    \pgfmathtruncatemacro{\c}{mod(\n-1,5)}
    \fill[brown!60]
        (\c+\eps,4-\r+\eps)
        rectangle
        (\c+1-\eps,5-\r-\eps);
        }
        
    % Numbers
    \foreach \row in {0,...,4}{
        \foreach \col in {0,1, 3,4}{
             \pgfmathtruncatemacro{\num}{9+min(\row,2)- max(2*\row, 2*\col-1)}
            \node at (\col+0.5,4.5-\row) {\num};}
            
        \foreach \col in {2}{
             \pgfmathtruncatemacro{\num}{1+min(\row,2)}
            \node at (\col+0.5,4.5-\row) {\num};
        }
    }
\end{tikzpicture}
    \caption{$r_{s,u,v}$ for $t=5$ and $s=3$.}
    \label{fig:table2}
\end{figure}

We may therefore apply Lemma~\ref{lm:matr_det} and obtain
\[
\frac{\det B_s}
     {\sign(\pi_s)
      \prod_{v=1}^{t}b_{s,\pi_s(v),v}}
\in
\left(
1-2n^{-1}K_0L_0t,
1+2n^{-1}K_0L_0t
\right)
\subset
\left(
1-\alpha k^{7-o},
1+\alpha k^{7-o}
\right).
\]
Thus
\[
\frac{\det B_s}
     {\sign(\pi_s)
      \prod_{v=1}^{t}b_{s,\pi_s(v),v}}
=
\frac{
\det D_s\,n^{\sum_{u=1}^{t}\min(u,s)}
}{
\sign(\pi_s)
\left(\prod_{v=1}^{t}c_{s,\pi_s(v),v}\right)
 n^{\sum_{v=1}^{t}r_{s,\pi_s(v),v}}
}
=:
1+\varepsilon_s,
\]
where
\[
|\varepsilon_s|<\alpha k^{7-o}.
\]
Clearly,
\[
\sign(\pi_s)=(-1)^{t-s},
\]
since $\pi_s$ has exactly $t-s$ inversions.

We now compute
\[
\sum_{v=1}^{t}
\left(
    r_{s,\pi_s(v),v}-\min(s,v)
\right) =
\sum_{v=1}^{t}
\left(
    r_{s,\pi_s(v),v}-\min(s,\pi_s(v))
\right) =
\]
\[
\sum_{v=1}^{s-1}(k-2v)
+
\sum_{v=s+1}^{t}(k-2v+1) = 
k(t-1)-(s-1)s-t^2+s^2 =
t(t-2)+s.
\]
Consequently,
\[
\det D_s
=
(-1)^{t-s}(1+\varepsilon_s)
\left(
\prod_{v=1}^{t}c_{s,\pi_s(v),v}
\right)
n^{t(t-2)+s}.
\]
We can now write
\[
\begin{aligned}
a_{2s-1} &=
\frac{\det D_s}{\det D} =
(-1)^{t-s}
\frac{1+\varepsilon_s}{1+\varepsilon}
\cdot
\frac{\prod_{v=1}^{t}c_{s,\pi_s(v),v}}
     {\prod_{v=1}^{t}e_{v,v}}
 n^{s-t} \\
&=
(-1)^{t-s}
\left(
\frac{1+\varepsilon_s}{1+\varepsilon}
\right)
\frac{-\prod_{v=s+1}^{t}e_{v-1,v}}
     {\prod_{v=s}^{t}e_{v,v}}
 n^{s-t} \\
&=
(-1)^{t-s+1}
\left(
\frac{1+\varepsilon_s}{1+\varepsilon}
\right)
\frac{
\prod_{v=s+1}^{t}
\left(
    w_{2(v-1),2v-1}(1+\Delta_{v-1,v})
\right)
}{
\prod_{v=s}^{t}
\left(
    w_{2v,2v-1}(1+\Delta_{v,v})
\right)
}
n^{s-t} \\
&=
(-1)^{t-s+1}
\left(
\frac{1+\varepsilon_s}{1+\varepsilon}
\right)
\frac{
\prod_{v=s+1}^{t}(1+\Delta_{v-1,v})
}{
\prod_{v=s}^{t}(1+\Delta_{v,v})
}
\frac{
\prod_{v=s+1}^{t}w_{2v-2,2v-1}
}{
\prod_{v=s}^{t}w_{2v,2v-1}
}
n^{s-t}.
\end{aligned}
\]
Recall that
\[
w_{x,y}
=
\begin{cases}
\displaystyle
\binom{k-x}{y-x}\frac{1}{(k-y)!},
&x\leq y,\\[2mm]
\displaystyle
(-1)^{x-y}\binom{x}{y}\frac{1}{(k-x)!},
&x>y.
\end{cases}
\]
Therefore
\[
w_{2v,2v-1}
=
-\frac{2v}{(k-2v)!},
\qquad
w_{2v-2,2v-1}
=
\frac{k-2v+2}{(k-2v+1)!}.
\]

Observe that
\[
1-4\alpha k^{7-o} <
(1-\alpha k^{7-o})^2 <
\frac{1-\alpha k^{7-o}}{1+\alpha k^{7-o}} <
\frac{1+\varepsilon_s}{1+\varepsilon} <
\]
\[
\frac{1+\alpha k^{7-o}}{1-\alpha k^{7-o}}  <
\frac{1}{(1-\alpha k^{7-o})^2} <
1+4\alpha k^{7-o},
\]
and
\[
1-12\alpha k^{4-o} <
(1-6\alpha k^{3-o})^k <
\frac{(1-6\alpha k^{3-o})^t}
     {(1+6\alpha k^{3-o})^t}  <
\frac{\prod_{v=s+1}^{t}(1+\Delta_{v-1,v})}{\prod_{v=s}^{t}(1+\Delta_{v,v})} <
\]
\[
\frac{(1+6\alpha k^{3-o})^t}
     {(1-6\alpha k^{3-o})^t} <
\frac{1}{(1-6\alpha k^{3-o})^k} <
\frac{1}{1-6\alpha k^{4-o}} <
1+12\alpha k^{4-o}.
\]
Here we repeatedly used Bernoulli's inequality and
\[
\frac{1}{1-x}<1+2x,
\qquad
0<x<\frac12.
\]
It follows that
\[
1-5\alpha k^{7-o} <
(1-4\alpha k^{7-o})(1-12\alpha k^{4-o}) <
\left(
\frac{1+\varepsilon_s}{1+\varepsilon}
\right) \frac{\prod_{v=s+1}^{t}(1+\Delta_{v-1,v})}{\prod_{v=s}^{t}(1+\Delta_{v,v})} <
\]
\[
(1+12\alpha k^{4-o})(1+4\alpha k^{7-o})  <
1+5\alpha k^{7-o}.
\]
Define $\tau_s$ by
\[
\left(
\frac{1+\varepsilon_s}{1+\varepsilon}
\right)
\frac{
\prod_{v=s+1}^{t}(1+\Delta_{v-1,v})
}{
\prod_{v=s}^{t}(1+\Delta_{v,v})
}
=
1+\tau_s.
\]
Then
\[
|\tau_s|<5\alpha k^{7-o}.
\]
Consequently,
\[
\begin{aligned}
a_{2s-1}
&=
(-1)^{t-s+1}(1+\tau_s)
\frac{\prod_{v=s+1}^{t}w_{2v-2,2v-1}}
     {\prod_{v=s}^{t}w_{2v,2v-1}}
 n^{s-t} \\
&=
(-1)^{t-s+1}(1+\tau_s)
\prod_{v=s+1}^{t}
\left(
\frac{k-2v+2}{(k-2v+1)!}
\right)
\prod_{v=s}^{t}
\left(
-\frac{(k-2v)!}{2v}
\right)
n^{s-t} \\
&=
(1+\tau_s)
\prod_{v=s+1}^{t}
\left(
\frac{k-2v+2}{(k-2v+1)!}
\right)
\prod_{v=s}^{t}
\left(
\frac{(k-2v)!}{2v}
\right)
n^{s-t} \\
&=
(1+\tau_s)
\prod_{v=s+1}^{t}
\left(
\frac{(k-2v+2)(k-2v)!}
     {(k-2v+1)!\,2v}
\right)
\frac{(k-2s)!}{2s}
n^{s-t} \\
&=
(1+\tau_s)
\prod_{v=s+1}^{t}
\left(
\frac{t-v+1}{(k-2v+1)v}
\right)
\frac{(k-2s)!}{2s}
n^{s-t} \\
&=
(1+\tau_s)
\frac{(t-s)!(k-2s)!(s-1)!}
     {2(2t-2s-1)!!\,t!}
 n^{s-t}.
\end{aligned}
\]
In particular,
\[
a_{2s-1}>0
\qquad
\text{for every }1\leq s\leq t.
\]

It remains to prove that all the odd-indexed eigenvalues
$\lambda_{2j-1}$ are positive for $1\leq j\leq t$. Recall that
\[
\lambda_{2j-1}
=
\sum_{s=1}^{t}a_{2s-1}P_{2j-1}(2s-1).
\]
We have already proved that every $a_{2s-1}$ is positive. By
Corollary~\ref{col:est}, the quantity $P_{2j-1}(2s-1)$ has the same
sign as $w_{2j-1,2s-1}$. The latter is positive: if
$2j-1\leq2s-1$, this follows from the first line in the definition
of $w_{x,y}$, whereas if $2j-1>2s-1$, then the sign factor is
\[
(-1)^{(2j-1)-(2s-1)}=1.
\]
Therefore every summand in the expression for $\lambda_{2j-1}$ is
positive, and hence
\[
\lambda_{2j-1}>0
\qquad
\text{for every }1\leq j\leq t.
\]
This completes the proof of Lemma~\ref{lm:key}.

\section{Uniqueness} \label{sec:Uniqueness}

\subsection{The eigenmatrices of the Johnson scheme} \label{subsec:basicsofLP}

Assume that $n\geq 2k$, and recall the notation
\[
    \mathcal X=\binom{[n]}{k}, \qquad
    (A_j)_{F,F'}
    =
    \mathbf 1_{\{|F\cap F'|=j\}}
\]
be the standard basis of the Bose--Mesner algebra of the Johnson scheme~\cite[Chapter~6]{GodsilMeagher2015Johnson}.  Let $E_0,\ldots,E_k$
be its primitive idempotents, and let $V_e=\operatorname{im}E_e$.
Thus
\[
    \mathbb R^\mathcal X=V_0\oplus\cdots\oplus V_k,
    \qquad
    E_0=\frac{1}{|\mathcal{X}|}J.
\]

In the convention used here, the first eigenmatrix $P$ with entries $P_e(j)$, $0\leq e,j \leq k$
is defined by
\[
    A_j=\sum_{e=0}^{k}P_e(j)E_e.
\]
Equivalently, $P_e(j)$ is the eigenvalue of $A_j$ on $V_e$.  These eigenvalues, also known as the dual Hahn or Eberlein
polynomials, are given by~\eqref{eq:Hahn}. 

The second eigenmatrix
\[
    Q=(Q_e(j))_{e,j=0}^{k}
\]
is defined by the inverse expansion
\[
    E_e=\frac{1}{|\mathcal{X}|}\sum_{j=0}^{k}Q_e(j)A_j.
\]
Let
\[
    v_j
    :=
    \binom{k}{j}\binom{n-k}{k-j}
\]
be the valency of $A_j$, and let
\[
    \mu_e
    :=
    \dim V_e
    =
    \binom{n}{e}-\binom{n}{e-1},
\]
where $\binom{n}{-1}=0$.  Then $P_0(j)=v_j$, $Q_e(k)=\mu_e$, and the two eigenmatrices are related by
\begin{equation} \label{eq:QsymP}
    Q_e(j)=\frac{\mu_e}{v_j}P_e(j).
\end{equation}    

The matrices $P$ and $Q$ also satisfy the orthogonality relations
\[
    QP^{\mathsf T}=P^{\mathsf T}Q=|\mathcal{X}|\cdot I.
\]

If $\mathcal F\subseteq \mathcal X$, $\chi_{\mathcal F}$ is its characteristic
vector, and
\[
    d_j
    :=
    \frac{1}{|\mathcal F|}
    \chi_{\mathcal F}^{\mathsf T}A_j\chi_{\mathcal F},
\]
where we index the distance distribution by the intersection size.
Its dual inner distribution is
\[
    d_e^{\perp}
    :=
    \sum_{j=0}^{k}Q_e(j)d_j
    =
    \frac{|\mathcal{X}|}{|\mathcal F|}
    \left\|E_e\chi_{\mathcal F}\right\|^2.
\]
In particular,
\[
    d_e^{\perp}\geq 0,
    \qquad
    d_e^{\perp}=0
    \quad\Longleftrightarrow\quad
    E_e\chi_{\mathcal F}=0.
\]

\subsection{The distance distribution in an extremal eventown} \label{subsec:distdist}
 
Suppose that $n > 10k^7$. We have proved that every $k$-uniform eventown has size at most $\binom{m}{t}$.
Consider an extremal eventown $\mathcal E$ and let $\chi_\mathcal{E}$ be its characteristic vector.
The extremality of $\mathcal E$ implies that for $\chi_\mathcal{E}$ inequality~\eqref{eq:main} turns to equality.
Hence
\[
\chi_\mathcal{E} \in V_0\oplus V_2\oplus\cdots\oplus V_{2t}.
\]

This gives a system of equations $d_{2e+1}^{\perp}=0$ for $e = 0,\dots,t-1$. 
Also from the eventown condition we have $d_{2e+1} = 0$ for $e = 0,\dots,t-1$. 
Thus all $d_i$ can be determined if the following matrix has full rank
\begin{equation}
    \mathcal R_{n,k}
    :=
    \begin{pmatrix}
        Q_1(2) & Q_1(4) & \cdots & Q_1(2t)\\
        Q_3(2) & Q_3(4) & \cdots & Q_3(2t)\\
        \vdots & \vdots & & \vdots\\
        Q_{2t-1}(2) & Q_{2t-1}(4) & \cdots & Q_{2t-1}(2t)
    \end{pmatrix}.
    \label{eq:distance-rank-matrix}
\end{equation}

By~\eqref{eq:QsymP} it suffices to prove that the corresponding minor of $P$ is nonsingular; this is the content of the following lemma.

\begin{lemma} \label{lm:fullrank}
The matrix
\[
F=(f_{u,v})_{u,v=1}^{t},
\qquad
f_{u,v}=P_{2u-1}(2v),
\]
has full rank whenever
\[
n\geq \alpha ^{-1}k^{o}.
\]
\end{lemma}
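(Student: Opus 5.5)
The plan is to follow the analysis of $D$ in the proof of Lemma~\ref{lm:key}, rescaling $F$ so that its leading-order part becomes triangular with nonzero diagonal. One difference from $D$ needs care. For $f_{u,v}=P_{2u-1}(2v)$, Corollary~\ref{col:est} gives $f_{u,v}=w_{2u-1,2v}\,n^{k-\max(2u-1,2v)}(1+\Delta'_{u,v})$ with $|\Delta'_{u,v}|\le 6\alpha k^{3-o}$. The row exponent after multiplying row $u$ by $n^{u}$ is $k+u-\max(2u-1,2v)$. In column $v$ this equals $k-v$ both at $u=v$ and at $u=v+1$. So the strict gap required in the last hypothesis of Lemma~\ref{lm:matr_det} fails for the entry just below the diagonal. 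I will therefore not apply Lemma~\ref{lm:matr_det} verbatim; instead I will rerun its proof with a weaker exponent condition, which the depth function still handles.

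\emph{Step 1 (normalisation).} Set $h_{u,v}:=f_{u,v}\,n^{u+v-k}$. Then $\det H=\det F\cdot n^{\sum_u u+\sum_v v-tk}$, and $h_{u,v}=w_{2u-1,2v}(1+\Delta'_{u,v})\,n^{-p_{u,v}}$ with penalty
\[
p_{u,v}=\max(2u-1,2v)-u-v=\begin{cases} v-u, & u\le v,\\ u-v-1, & u>v.\end{cases}
\]
Thus the zero-penalty cells are exactly the diagonal and the subdiagonal. The limiting matrix is lower bidiagonal, its determinant is $\prod_v w_{2v-1,2v}$, and each $w_{2v-1,2v}=\binom{k-2v+1}{1}/(k-2v)!$ is nonzero (indeed positive).

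\emph{Step 2 (penalty versus depth).} For any $\rho\in S_t$ I will show $\sum_j p_{\rho(j),j}\ge \ddp(\rho^{-1})$. Equivalently, the total penalty is at least the total upward displacement. This holds because for $\rho(j)\le j$ the penalty is exactly $j-\rho(j)$, while downward moves cost nonnegative amounts. Since $\sum_j(\rho(j)-j)=0$, the total up-displacement equals $\ddp(\rho^{-1})$, which is positive for $\rho\neq\mathrm{id}$. This replaces the third and fourth hypotheses of Lemma~\ref{lm:matr_det}, and the rest of that proof only used the inequality ``exponent loss $\ge$ depth''.

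\emph{Step 3 (ratio bounds).} Using Lemma~\ref{lm:ratio_est} with $x=2u-1\mapsto 2u+1$ (two steps) together with the factors $1\pm6\alpha k^{3-o}$, exactly as in Proposition~\ref{pr:est1}, I get $K_0^{-1}\le |b_{u+1,v}/b_{u,v}|\le L_0$ for $b_{u,v}=w_{2u-1,2v}(1+\Delta'_{u,v})$. Then the counting bound $|R_\ell|<t^\ell$ and the geometric series from the proof of Lemma~\ref{lm:matr_det} give
\[
\Bigl|\tfrac{\det H}{\prod_v h_{v,v}}-1\Bigr|\le 2n^{-1}K_0L_0t<\alpha k^{7-o}<1
\]
by~\eqref{ineq:n_big}. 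Hence $\det F\neq0$.

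The main obstacle is Step 2. The permutations that use subdiagonal cells cost no power of $n$, so one must check that they are still charged in the depth accounting. A nonidentity permutation must move some index upward ($\rho(j)<j$, i.e.\ into a cell above the diagonal, with penalty $j-\rho(j)\ge1$); the subdiagonal cells are free only in the downward direction. If this bookkeeping fails for the convention $\ddp(\rho)$ versus $\ddp(\rho^{-1})$, the fallback is to transpose $F$, which swaps the roles of up and down.
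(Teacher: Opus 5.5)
Your argument is correct and is essentially the paper's: it uses the same estimate from Corollary~\ref{col:est}, the same two-step ratio bound (this is Proposition~\ref{pr:est2}), and the depth expansion of Lemma~\ref{lm:matr_det} with the identity as dominant permutation. Your Step~2 is valid because $\ddp(\rho)=\ddp(\rho^{-1})$, so the transposition fallback is not needed. The obstacle you found comes only from multiplying row $u$ by $n^{u}$, which the paper does not do. With the unscaled exponents $r_{u,v}=k-\max(2u-1,2v)$ one has $r_{u,v}\le r_{v,v}=k-2v$, and $r_{v,v}-r_{u,v}=2(u-v)-1\ge u-v$ for $u>v$. So Lemma~\ref{lm:matr_det} applies verbatim. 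Row scaling shifts every permutation's total exponent by the same amount, so your penalty bound and these column conditions control the same quantity.
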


\begin{proof}

Set
\[
g_{u,v} := f_{u,v}n^{\max(2u-1,2v)-k}.
\]

By Corollary~\ref{col:est}, we have
\[
\left| \frac{g_{u,v}}{w_{2u-1,2v}}-1 \right| \leq 6\alpha k^{3-o}.
\]

\begin{proposition}\label{pr:est2}
Let $1\leq u\leq t-1$ and $1\leq v\leq t$. Then
\[
K_0^{-1}
=
\frac{1-6\alpha k^{3-o}}{k^2(1+6\alpha k^{3-o})}
\leq
\left|
\frac{g_{u+1,v}}{g_{u,v}}
\right|
\leq
\frac{k^4(1+6\alpha k^{3-o})}
     {4(1-6\alpha k^{3-o})}
=
L_0.
\]
\end{proposition}

\begin{proof}
We have
\[
\left|
\frac{g_{u+1,v}}{g_{u,v}}
\right|
=
\left|
\frac{g_{u+1,v}}{w_{2u+1,2v}}
\right|
\left|
\frac{w_{2u+1,2v}}{w_{2u-1,2v}}
\right|
\left|
\frac{w_{2u-1,2v}}{g_{u,v}}
\right|.
\]
Consequently,
\[
\frac{1-6\alpha k^{3-o}}{k^2(1+6\alpha k^{3-o})}
\leq
\left|
\frac{g_{u+1,v}}{w_{2u+1,2v}}
\right|
\left|
\frac{w_{2u+1,2v}}{w_{2u-1,2v}}
\right|
\left|
\frac{w_{2u-1,2v}}{g_{u,v}}
\right|
\leq
\frac{k^4(1+6\alpha k^{3-o})}
     {4(1-6\alpha k^{3-o})}.
\]
\end{proof}

We show that the matrix
\[
f_{u,v}
=
g_{u,v}n^{k-\max(2u-1,2v)}
\]
satisfies the assumptions of Lemma~\ref{lm:matr_det} with
\[
K=K_0,
\qquad
L=L_0,
\qquad
\pi=\operatorname{id}.
\]

The first two assumptions follow from
\eqref{ineq:n_big} and Proposition~\ref{pr:est2}, respectively.
It remains to verify that
\[
r_{u,v}
:=
k-\max(2u-1,2v)
\]
together with the identity permutation satisfies the last two
assumptions of Lemma~\ref{lm:matr_det}.

Indeed,
\[
r_{u,v} = k-\max(2u-1,2v) \leq k-2v = r_{v,v}.
\]
Moreover, if $u>v$, then
\[
r_{v,v}-r_{u,v} = (k-2v)-(k-2u+1) = 2(u-v)-1 \geq  u-v.
\]
Thus all the assumptions of Lemma~\ref{lm:matr_det} are satisfied.

We may therefore apply Lemma~\ref{lm:matr_det}, and
Corollary~\ref{col:full_rk} implies that the matrix $F$ is nonsingular.
\end{proof}

Since $\mathcal R_{n,k}$ is non-degenerate and $\sum d_i = |\mathcal{E}|$ all values of $d_i$ are determined provided that $n > 10k^7$.
Thus $d_i$ must coincide with the distribution of an atomic construction $\mathcal{A}_t$. So
\[
d_{2i} = \binom{t}{i} \binom{m-t}{t-i}
\]
for every $0 \leq i \leq t$.

\subsection{Combinatorial argument} \label{subsec:uniqueness}

Let $\mathcal E\subseteq\binom{[n]}{k}$ be an eventown of size $\binom{m}{t}$.
By the previous section we have
\[
d_{k-2} = t(m-t),
\]
so there exists a set $e$ from $\mathcal{E}$ which intersects at least $t(m-t)$ sets from $\mathcal{E}$ in exactly $k-2$ elements.

Let \[
\mathcal H_e
:=
\left\{
    A\cup B:
    A\in\binom e2,\ 
    B\in\binom{[n]\setminus e}{2}
\right\}
\]
be the set of all quadruples intersecting $e$ in exactly 2 elements. 
Choose distinct sets $c_1,\dots, c_{t(m-t)}$ intersecting $e$ in $k-2$ elements.
Let $C_0$ be any maximal by inclusion (non-uniform) eventown which contains $\mathcal{E}$; clearly $C_0$ is a linear subspace over $\mathbb{F}_2$.
Then 
\[
\{e + c_i \,|\, 1\leq i \leq t(m-t) \}
\]
is a 4-uniform eventown $\mathcal{F}$ of size $t(m-t)$ which is contained in $C_0 \cap \mathcal H_e$ (here and below ``+'' means addition over $\mathbb{F}_2$ which corresponds to the symmetric difference in the set theoretical language).
Now let us consider maximal with respect to inclusion 4-uniform eventown $\mathcal{G}$ containing $\mathcal{F}$. 
Let $C$ be any maximal by inclusion (non-uniform) eventown which contains $\mathcal{G}$; $C$ is also a linear subspace over $\mathbb{F}_2$. Since $\mathcal{G}$ is maximal by inclusion we have
\[
C \cap \binom{[n]}{4} = \mathcal{G}.
\]

Define $\mathcal{F}_7$ as a family of 4-sets with pairwise intersections of size 2 (the family of complements in $[7]$ of the lines of the Fano plane).
Define $\mathcal{F}_8$ to be the union of $\mathcal{F}_7$ and the complements of members of $\mathcal{F}_7$ in the set $[8]$.

\begin{table}[h]
    \centering
    \begin{tabular}{ccccccc}
     1 & 2 & 3 & 4 & & & \\
     1 & 2 & & & 5 & 6 & \\
     1 & & 3 & & 5 & & 7 \\
     1 & & & 4 & & 6 & 7 \\
     & 2 & 3 & & & 6 & 7 \\
     & 2 & & 4 & 5 & & 7 \\
     & & 3 & 4 & 5 & 6 &  
    \end{tabular}
    \caption{The definition of $\mathcal{F}_7$}
    \label{tab:my_label}
\end{table}

\begin{lemma} \label{lm:weight-four-decomposition}
    Let $\mathcal{G} \subset \binom{[n]}{4}$ be a maximal with respect to inclusion $4$-uniform eventown. 
    Then $\mathcal{G}$ is a union of an atomic construction $\mathcal{A}_\mathcal{G}$ and several copies of $\mathcal{F}_7$ and $\mathcal{F}_8$ with pairwise disjoint supports.
    Thus 
    \[
    |\mathcal{G}| = \binom{a}{2} + 7b + 14c,
    \]
    where $2a + 7b + 8c \leq n$.
\end{lemma}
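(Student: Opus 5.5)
We would prove Lemma~\ref{lm:weight-four-decomposition} by passing to the binary code spanned by $\mathcal G$ and classifying it. This is the code-theoretic analogue of the classification of root systems.

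\textbf{Step 1: reduction to connected pieces.} Let $C=\spann_{\mathbb F_2}\mathcal G$. Since all members have size $4$ and pairwise intersections are even, $C$ is self-orthogonal. Moreover $\wt(x+y)=\wt x+\wt y-2|x\cap y|$, and for $x,y\in\mathcal G$ we have $|x\cap y|\in\{0,2\}$. Hence $C$ is doubly even: all weights are divisible by $4$.

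Call two members of $\mathcal G$ linked if they intersect, and take the connected components of this relation. Distinct components have disjoint supports, so $C$ is the direct sum of the codes spanned by the components. For a component with support $S$, maximality of $\mathcal G$ has two consequences. The component equals the set of weight-$4$ words of its span. In addition, no further $4$-set inside $S$ (or partly inside $S$ and partly in free points) can be added. It therefore suffices to classify a single indecomposable doubly-even code spanned by its weight-$4$ words, with the family taken to be all its weight-$4$ words.

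\textbf{Step 2: local classification.} Fix a component $\mathcal K$ with support $S$. Declare $x,y\in S$ twins if every member of $\mathcal K$ contains both or neither.

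\emph{Atomic case.} Suppose every twin class has size $2$. Then every member of $\mathcal K$ is a union of two twin pairs. Maximality inside $S$ forces all $\binom a2$ such unions to be present, where $a=|S|/2$. This gives an atomic piece $\mathcal A_{\mathcal G}$. We expect at most one atomic component in the final statement; in any case several atomic pieces can be merged, because unions of two pairs taken from different pieces also meet everything evenly, so maximality forces them to be a single atomic piece.

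\emph{Non-atomic case.} Suppose some member $x$ is not a union of two twin pairs. Then there are members $x,y,z$ with $|x\cap y|=|x\cap z|=|y\cap z|=2$ such that $x\cap y$, $x\cap z$, $y\cap z$ are not all the same pair. A short check shows that the span of $x,y,z$ consists of the complements of lines of a Fano plane on $7$ points, i.e.\ $\mathcal F_7$ (the $[7,3]$ simplex code). Maximality within those $7$ points then forces the entire $\mathcal F_7$ into $\mathcal K$.

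\textbf{Step 3: what can be attached to $\mathcal F_7$.} Consider any further $4$-set $w$ that meets the support $[7]$ and intersects every member of $\mathcal F_7$ evenly.
\begin{itemize}
\item If $w\subseteq[7]$, then $w\in\mathcal F_7$, since $\mathcal F_7$ is exactly the set of weight-$4$ words of the dual-closed code.
\item If $|w\cap[7]|=3$, the even-intersection conditions force $w\cap[7]$ to be a line of the Fano plane, so $w$ is the complement in $[8]$ of a member of $\mathcal F_7$ for a new point $8$. Maximality then forces all $7$ such complements, giving $\mathcal F_8$ (the extended Hamming code $e_8$).
\item If $|w\cap[7]|\in\{1,2\}$, a parity check against the $7$ members of $\mathcal F_7$ gives a contradiction.
\end{itemize}
Starting from $\mathcal F_8$, every $4$-set meeting $[8]$ in $1$, $2$ or $3$ points fails evenness against some codeword. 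This holds because the extended Hamming code is self-dual on $[8]$, so its dual contains no words of weight $1$, $2$ or $3$ restricted to $[8]$. Hence $\mathcal F_8$ is a full component.

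\textbf{Step 4: counting.} Summing over components gives $|\mathcal G|=\binom a2+7b+14c$ with disjoint supports of sizes $2a$, $7$ and $8$. This yields $2a+7b+8c\le n$.

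The main obstacle is the case analysis in Steps 2 and 3. The crux is showing that a component is either a pure pair-structure or contains a Fano triple. We would first attempt the twin-quotient argument above, and fall back on the Witt/Pless–Sloane decomposition of doubly-even codes generated by weight-$4$ words into $d_{2a}$, $e_7$, $e_8$ if the direct case check becomes unwieldy.
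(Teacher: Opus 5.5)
\textbf{There is a gap at the step you yourself call the crux, and the criterion you state there is false.}

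\emph{Comparison of routes.} Your route differs from the paper's. The paper works globally with the asymmetric relation $T_i$ (every member through $i$ contains $j$). It picks $f\in\mathcal Y$ minimizing $|Bd(f)|$ to obtain a fixed-point-free involution when all $T_i\neq\emptyset$. Otherwise it reads off a Fano triple through a point $x$ with $T_x=\emptyset$. You instead split $\mathcal G$ into connected components and argue locally with the symmetric twin relation. That framing is viable and somewhat cleaner: components make the peeling-off and the count immediate. Your Step~3 parity check, including self-duality of the span of $\mathcal F_8$, is also more explicit than the paper's one-line treatment.

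\emph{The gap.} From ``some member $x$ is not a union of two twin pairs'' you only assert that there exist $x,y,z$ with pairwise intersections of size $2$, not all equal. You then claim that any such triple spans $\mathcal F_7$. That criterion does not detect a Fano plane. In any atomic piece with $a\ge 3$, the sets $x=p_1\cup p_2$, $y=p_1\cup p_3$, $z=p_2\cup p_3$ satisfy it. Yet $z=x+y$, so the span is only $\{0,x,y,z\}$. Hence the atomic/Fano dichotomy is not established as written.

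\emph{How to close it.} The missing argument is short. Fix $x\in\mathcal K$ and consider the pairs $y\cap x$ over all members $y$ with $|y\cap x|=2$.
\begin{itemize}
\item If there are no such $y$, all four points of $x$ are twins and $\mathcal K=\{x\}$, an atomic piece with $a=2$. Your case ``every twin class has size $2$'' omits this case.
\item If all these pairs lie in $\{P,x\setminus P\}$ for a single partition of $x$, then every member meets $x$ in $\emptyset$, $P$, $x\setminus P$ or $x$. So $P$ and $x\setminus P$ are twin pairs.
\item Otherwise two of these pairs, $y\cap x$ and $z\cap x$, share exactly one point $p$. Parity of $|y\cap z|$ forces $y\cap z=\{p,q\}$ with $q\notin x$. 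Then $p\in z$ but $p\notin x+y$, so $x,y,z$ are linearly independent. All seven nonzero words of their span have weight $4$ on $7$ points, i.e.\ they form a copy of $\mathcal F_7$.
\end{itemize}

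You should also restate your Step~2 split as ``every member of $\mathcal K$ is a union of two twin pairs'' versus ``not''. In the first case every point has a twin. A twin class of size $\ge 3$ occurs only when $\mathcal K$ is a single quadruple. So all twin classes have size exactly $2$ except in that case.

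With these repairs, the rest of your argument goes through. Falling back on the Pless--Sloane classification ($d_{2a}$, $e_7$, $e_8$) would also close the gap, but then the lemma reduces to a citation.
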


\begin{proof}
By maximality we have $\spann (\mathcal{G}) \cap \binom{[n]}{4} = \mathcal{G}$.
Suppose that $\mathcal{G}$ is not an atomic eventown.

For every $i \in [n]$ let $T_i$ be a set of $j \in [n]\setminus \{i\}$ such that every quadruple in $\mathcal G$ containing $i$ also contains $j$, and let $\mathcal{Y}$ be a set of functions $f: [n] \rightarrow[n]$, such that $f(i) \in T_i$ for every $i$. 
If for every $i$ one has $T_i \neq \emptyset$, then $\mathcal{Y} \neq \emptyset$.

If there exists the involution $f \in \mathcal{Y}$, then we can split $[n]$ into pairs, which are orbits of $f$ (by definition $f$ has no fixed point), so $\mathcal{G}$ is subset of the atomic eventown on these pairs, so it is exactly atomic construction on these pairs. 

For every $f \in \mathcal{Y}$ define
$Bd(f) = \{i \in  [n] \,|\, f(f(i)) \neq i \}$.
$\mathcal{Y}$ contains no involution, so $Bd(f) \neq \emptyset$ for every $f \in \mathcal{Y}$. Let $f$ be an arbitrary function with the minimal value of $|Bd(f)|$.

Since $f$ is not an involution, there exist $i \in Bd(f),j,\ell \in [n]$  such that $f(i) = j$, $f(j) = \ell$, $i \neq \ell$. By the definition of $\mathcal{Y}$ and $T_i$ every quadruple from $\mathcal G$ containing $i$ also contains $j$ and $\ell$. Since $\mathcal G$ is an eventown this means that there is only one quadruple in $\mathcal{G}$ containing $i$, say, $\{i,j,\ell,r\}$.
Then for every $x \in\{i,j,\ell, r\}$ one has $T_{x} = \{i,j,\ell, r\} \setminus \{x\}$.
Then we define function $g: [n] \rightarrow [n]$ as
\[
g(x)= \begin{cases}
f(x), x \neq j, \ell, r \\
i, x=j \\
r, x=\ell \\ 
\ell, x=r. \\
\end{cases}
\]
Obviously $g \in \mathcal{Y}$ and 
$Bd(g)  \subset Bd(f) \setminus \{i,j,\ell, r\}$ because for $x \in \{i,j,\ell,r\}$ one has $g(g(x))= x$. Also, for $x\in [n] \setminus Bd(f) \setminus \{i,j,\ell, r\}$ one has  $f(f(x))= x$, so $f(x) \notin \{i,j,\ell, r\}$, so $x = f(f(x)) = f(g(x)) = g(g(x))$. Then $x \notin Bd(g)$. 
The element $i$ is a witness of inequality $|Bd(g)| < |Bd(f)|$ which is a contradiction.

Then for some element $x \in [n]$ one has $T_{x} =\emptyset$.
Thus $\mathcal{G}$ contains sets $e_1 = \{x,y_1,y_2,y_3\}$, $e_2 = \{x,y_1,y_4,y_5\}$, $e_3 = \{x,y_2,y_4,y_6\}$ for some $y_1,\dots, y_6 \in [n]$.
Hence $\{e_1, e_2, e_3, e_1 + e_2, e_1 + e_3, e_2 + e_3, e_1 + e_2 + e_3\}$ is a copy $E$ of $\mathcal{F}_7$ in $S := \{x,y_1,\dots, y_6\}$. 
If a set $g$ from $\mathcal{G} \setminus E$ intersects $S$ then $C$ contains $g + e$ for every $e \in E$. Hence $\spann(e_1,e_2,e_3,g) \cap \binom{[n]}{4}$ is a copy of $\mathcal{F}_8$ in $\mathcal{G}$ with support $S \cup \{y_7\}$.
Excluding copies of $\mathcal{F}_7$ and $\mathcal{F}_8$ one by one we obtain a (possibly empty) atomic construction $\mathcal{A}_\mathcal{G}$.
\end{proof}

We now estimate the number of members of $\mathcal G$ that meet $e$
in exactly two points. 
 
By Lemma~\ref{lm:weight-four-decomposition}, the family $\mathcal G$
is a union, on pairwise disjoint coordinate supports, of an atomic part $\mathcal {A}_\mathcal{G}$ and copies of $\mathcal F_7$ and $\mathcal F_8$.

Let $A$ be the number of atoms of $\mathcal {A}_\mathcal{G}$ contained in $e$, $B$ be the number of atoms of $\mathcal {A}_\mathcal{G}$ contained in $[n] \setminus e$
and $R$ be the number of split atoms, that is, atoms containing one point of $e$ and one point of $[n] \setminus e$.
Let $S_1,\ldots,S_h$ be the supports of the exceptional components,
that is, the exceptional components whose intersection with $\mathcal H_e$ is nonempty. Put
\[
    q
    :=
    \sum_{j=1}^{h}|S_j\cap e|,
    \qquad
    w
    :=
    \sum_{j=1}^{h}|S_j\cap ([n] \setminus e)|.
\]
We have $|S_j\cap e|\geq 2$ and $|S_j|\leq 8$, so
\[
    |S_j \cap ([n] \setminus e)|
    \leq
    3|S_j\cap e|.
\]
Consequently, $w\leq 3q$ and $2h\leq q$.
Counting the coordinates of the atomic and exceptional components
inside $e$ and $[n] \setminus e$, respectively, gives
$2A + R + q\leq 2t$ and $2B + R + w \leq 2(m-t)$.

A member of the atomic component  $\mathcal {A}_\mathcal{G}$ belongs to
$\mathcal H_e$ precisely in one of the following two cases:
\begin{enumerate}
    \item it is the union of one atom contained in $e$ and one atom
          contained in $[n] \setminus e$;
    \item it is the union of two atoms meeting both $e$ and $[n] \setminus e$.
\end{enumerate}
Therefore
\[ 
| \mathcal {A}_\mathcal{G}\cap\mathcal H_e| \leq AB+ \binom{R}{2}.
\]

Each exceptional component contains at most $14$ sets, hence in total at most $14h\leq 7q$ members of $\mathcal{G}$ come from the exceptional components.
Summing up
\[
    |\mathcal G\cap\mathcal H_e|
    \leq
    AB+\binom R2+7q
    \leq
    \left(t-\frac{q+R}{2}\right)
    \left(m-t-\frac{w+R}{2}\right)
    +\binom R2+7q.
\]

Set $u:=q+R$ and $v:=w+R$, so that $v=w+R\leq 3q+3R=3u$ and $u\leq 2t$.
Now consider the difference between the lower bound coming from the distance distribution and the above upper bound on $|\mathcal G\cap\mathcal H_e|$ 
\[
t(m-t) - \left(
        \left(t-\frac u2\right)
        \left(m-t-\frac v2\right)
        +\binom R2+7q
    \right)
=
    \frac{tv}{2}
    +\frac{u(m-t)}{2}
    -\frac{uv}{4}
    -\binom R2
    -7q
    \geq
    \]
    \[
    \frac{u(m-t)}{2}
    -\frac{3u^2}{4}
    -\frac{u^2}{2}
    -7u
    =
    u\left(
        \frac{m-t}{2}
        -\frac{5u}{4}
        -7
    \right)
    \geq
    u\left(
        \frac{m-t}{2}
        -\frac{5t}{2}
        -7
    \right)
    =
    \frac{u}{2}(m-6t-14).
\]
Since $m>6t+14$, the preceding inequalities force $u=0$, hence $q=R=0$, and therefore $w = 0$.  
Hence there are no components isomorphic to $\mathcal F_7$ or $\mathcal F_8$, and no
atom of an atomic component is split between $e$ and $[n] \setminus e$.

Thus $\mathcal{F} = \mathcal G \cap \mathcal H_e$ and also $\mathcal{F}$ coincides with the intersection of some atomic construction $\mathcal A$ with $\mathcal H_e$.
Since $\mathcal F, \mathcal E \subset C_0$, we have $\mathcal{A} \cap \binom{[n]}{4} \subset C_0$.
Suppose that the intersection of some $k$-set $e_1 \in \mathcal E$ with some atom $p_1$ of $\mathcal A$ has size one. Since $m > 2t$ there is an atom $p_2$ of $\mathcal A$ which does not intersect $e_1$. This is a contradiction since $p_1 \cup p_2$ belongs to $C_0$. 

Thus every member of $\mathcal E$ meets every atom $p_i$ in an even number of points. Consequently, $\mathcal{E} \subset \mathcal{A}$ and inherits atomic structure.

\begin{remark}
The present setup is reminiscent of recent work on $T$-avoiding
spherical codes~\cite{boyvalenkov2025kissing,boyvalenkov2026universal,boyvalenkov2025energy,boyvalenkov2026t}.
Many of the examples considered there arise as minimum-vector layers
of extremal lattices.  This is analogous to the relation between the
uniform atomic eventown and the full atomic eventown: the latter is
a self-dual binary code, while the former is one of its
constant-weight layers.

The classification problem is substantially less rigid in the
spherical setting.  In dimension $32$, there are more than $10^7$
non-isometric extremal even unimodular lattices, and every such lattice produces an extremal $T$-avoiding kissing configuration.  
Such lattices are generated by their minimal vectors, so their normalized
minimum-vector layers give more than $10^7$ non-isometric spherical
codes.  Nevertheless, all these layers have the same distance
distribution.  Thus the distance distribution alone cannot
distinguish, let alone classify, the extremal configurations.
\end{remark}

\paragraph{Acknowledgments.} We thank Frank Vallentin for the discussion around the key lemma.
Danila Cherkashin is supported by Bulgarian NSF grant KP-06-N72/6-2023.

\bibliographystyle{plain}
\bibliography{main}

\end{document}